\documentclass[10pt]{article}

\usepackage{amssymb,amsmath,amsthm,amsfonts}
\usepackage{graphicx}
\usepackage[usenames]{color}
\usepackage{algorithm}
\usepackage{algpseudocode}

\newtheorem{thm}{Theorem}[section]
\newtheorem{rmk}{Remark}[section]

\newtheorem{lem}{Lemma}[section]
\newtheorem{ass}{Assumption}[section]
\newtheorem{prop}{Proposition}[section]
\newtheorem{cor}{Corollary}[section]
\newtheorem{exam}{Example}

\newcommand{\bs}{\boldsymbol}
\setlength\topmargin{-1cm} \setlength\textheight{220mm}
\setlength\oddsidemargin{0mm}
\setlength\evensidemargin\oddsidemargin \setlength\textwidth{160mm}
\setlength\baselineskip{18pt}

\title{Multi-Parameter Tikhonov Regularization}
\date{\today}
\author{Kazufumi Ito\thanks{Center for Research in Scientific Computation \&
Department of Mathematics, North Carolina State University, Raleigh,
NC 27695, USA. (kito@math.ncsu.edu)} \quad\quad Bangti
Jin\thanks{Department of Mathematics and Institute for Applied Mathematics and Computational Science,
Texas A\&M University, College Station, 77843-3368 Texas, USA. ({\texttt
btjin@math.tamu.edu})}\quad\quad Tomoya Takeuchi\thanks{Center
for Research in Scientific Computation, North Carolina State
University, Raleigh, NC. ({\tt tntakeuc@ncsu.edu})}}
\begin{document}
\maketitle

\begin{abstract}
We study multi-parameter Tikhonov regularization, i.e., with multiple penalties. Such
models are useful when the sought-for solution exhibits several distinct features
simultaneously. Two choice rules, i.e., discrepancy principle and balancing principle,
are studied for choosing an appropriate (vector-valued) regularization parameter, and
some theoretical results are presented. In particular, the consistency of the discrepancy
principle as well as convergence rate are established, and an a posteriori error estimate
for the balancing principle is established. Also two fixed point algorithms are proposed
for computing the regularization parameter by the latter rule. Numerical results for
several nonsmooth multi-parameter models are presented, which show clearly their superior
performance over their single-parameter counterparts.
\\
Key words: multi-parameter regularization, value function, balancing principle, parameter
choice.
\end{abstract}

\section{Introduction}
In this paper, we are interested in solving linear inverse problems
\begin{equation}\label{eqn:lininv}
Kx=y^\delta,
\end{equation}
where $y^\delta\in Y$ is a noisy version of the exact data $y^\dagger=Kx^\dagger\in Y$
with $\delta^2=\phi(x^\dagger,y^\delta)$ being the noise level, the operator
$K:X\rightarrow Y$ is bounded and linear, and the spaces $X$ and $Y$ are Banach spaces.

Typically, problem \eqref{eqn:lininv} suffers from ill-posedness in the sense that a
small perturbation in the data might lead to large deviations in the retrieved solution,
and this often poses great challenges to their stable yet accurate numerical solution.
Usually, a regularization strategy is applied to find a stable approximate solution
\cite{TikhonovArsenin:1977,EnglHankeNeubauer:1996}. The most widely adopted approach is
Tikhonov regularization, which seeks an approximation $x_{\bs\eta}^\delta$ to problem
\eqref{eqn:lininv} by minimizing the following Tikhonov functional
\begin{equation*}
J_{\bs\eta}(x)=\phi(x,y^\delta)+\bs{\eta}\cdot\bs{\psi}(x).
\end{equation*}
Here the functionals $\phi$ and $\bs\psi$ represent data fidelity and (vector-valued)
penalty, respectively, and $\bs{\eta}\cdot\bs\psi(x)$ is the dot product between
$\bs{\eta}=(\eta_1,\ldots,\eta_n)^\mathrm{T}$ and $\bs\psi(x) =
(\psi_1(x),\ldots,\psi_n(x))^\mathrm{T}$, i.e.,
$\bs{\eta}\cdot\bs{\psi}(x)=\sum_{i=1}^n\eta_i\psi_i(x)$. Common choices of the fidelity
$\phi(x,y^\delta)$ include $\|Kx-y^\delta\|_{L^2}^2$, $\|Kx-y^\delta\|_{L^1}$ and
$\int(Kx-y^\delta\ln Kx)$, which are statistically well suited to additive Gaussian
noise, Laplace (impulsive) noise and Poisson noise, respectively. The penalties $\psi_i$
are nonnegative, convex and (weak) lower semicontinuous. The typical choice includes
$\|x\|_{L^2}^2$, $\|x\|_{\ell^p}^p$, $\|x\|_{H^m}^2$ and $|x|_{TV}$ etc. The
regularization parameter vector $\bs\eta$ compromises fidelity with penalties.

The use of multiple penalties, henceforth called multi-parameter regularization, in the
functional $J_{\bs\eta}$ is motivated by practical applications which exhibit
multiple/multiscale features. We just take microarray data analysis for an example. Here
the number of data is often far less than that of the unknowns. A desirable approach
should select all variables relevant to the proper functioning of gene network. The
conventional $\ell^2$ penalty tends to select all variables, including irrelevant ones,
since the resulting estimate has almost no nonzero entries. To remedy this issue,
$\ell^1$ penalty has been suggested as an alternative. However, the $\ell^1$ approach
delivers undesirable results for problems where there are highly correlated features and
all relevant ones are to be identified in that it tends to select only one feature out of
the relevant group instead of all relevant features of the group \cite{ZouHastie:2005},
thereby missing the groupwise structure. Zou and Hastie \cite{ZouHastie:2005} proposed
the elastic-net by incorporating the $\ell^2$ penalty into the $\ell^1$ penalty, in the
hope of retrieving the whole relevant group, and numerically demonstrated its excellent
performance for simulation studies and real-data applications. Such multiple/multiscale
features appear also in many other applications, e.g., image processing
\cite{LuShenXu:2007,Stephanakis:1997}, electrocardiography
\cite{BrooksAhmadMacLeodMaratos:1999}, and geodesy \cite{XuFukudaLiu:2006}.

A number of experimental studies
\cite{BrooksAhmadMacLeodMaratos:1999,ZouHastie:2005,XuFukudaLiu:2006} have shown great
potential of multi-parameter models for better capturing multiple distinct features of
the solution. However, a general theory for such models remains largely under-explored.
There are several attempts on various aspects, e.g., parameter choice, convergence and
statistical interpretation
\cite{BelgeKilmerMiller:2002,BrezinskiRedivoRodriguezSeatzu:2003,ChenLuXuYang:2008,JinLorenzSchiffler:2009,
DeMolVetoRosasco:2009,LuPereverzev:2009,LuPereverzevShaoTautenhahn:2010} of
multi-parameter regularization. For instance, Lu et al \cite{LuPereverzev:2009} discussed
the discrepancy principle using Hilbert space scales, and derived some error estimates,
but the parameter is vastly nonunique and it remains unclear which one to use. They also
adapted the model function approach to choose the regularization parameter, but the
underlying mechanism remains unclear. Jin et al \cite{JinLorenzSchiffler:2009} recently
investigated the properties, e.g., consistency and error estimates, of elastic-net for
asymptotically linear coupling between the two terms, and proposed two active-set type
methods for efficient numerical realization.

This paper aims at developing some theory for such models in a general framework. The
value function and its properties are first derived. Then two parameter choice rules,
i.e., discrepancy principle and balancing principle, are studied. The consistency and
convergence rates are established for the former. The balancing principle can be derived
from the Bayesian inference \cite{JinZou:2009}, and it was generalized in
\cite{ItoJinZou:2010}. The principle balances the penalty with the fidelity term. The
variant under consideration here is solely based on the value function, and does not
require a knowledge of the noise level. An a posteriori error estimate is derived, and
two efficient numerical algorithms are also proposed.

The rest of the paper is structured as follows. In Section \ref{sec:F}, we investigate
the value function and derive some properties, e.g., monotonicity, concavity, asymptotic
and especially differentiability. In Section \ref{sec:rule}, we investigate two parameter
choice rules, i.e., discrepancy principle and balancing principle, and discuss their
theoretical properties. In addition, two fixed point algorithms for the efficient
numerical realization of the balancing principle are proposed. Numerical results for
several examples are presented in Section \ref{sec:exp} to illustrate the efficiency and
accuracy of the proposed approaches. Finally, we conclude the paper with several future
research topics.

\paragraph{Notation} Let $x_{\bs\eta}^\delta$ be a minimizer to the functional $J_{\bs\eta}(x)$, and
$\mathcal{M}_{\bs\eta}$ be the set of minimizers. For vectors $\bs\eta\in\mathbb{R}^n$
and $\hat{\bs\eta}\in \mathbb{R}^n$, we denote by ${\bs\eta} \leq \hat{\bs\eta}$ if
$\eta_i \leq \hat{\eta}_i\,\forall1\leq i\leq n$.

\section{The value function and its properties}\label{sec:F}
In this section, we collect important properties of the value function $F(\bs\eta)$
defined by
\begin{equation}\label{valuef}
F(\bs\eta)=\inf_{x\in Q_{\text{ad}}}J_{\bs\eta}(x),
\end{equation}
where the set $Q_\mathrm{ad}$ stands for a convex constraint. Here, the existence of a
minimizer $x_{\bs\eta}^\delta$ to the functional $J_{\bs\eta}$ is not a priori assumed.
Provided that a minimizer $x_{\bs\eta}^\delta$ does exist, we have
$F(\bs\eta)=J_{\bs\eta} (x_{\bs\eta}^\delta)$. The value function $F$ will play an
important role in developing a balancing principle, see Section \ref{subsec:bal}. The
results presented below generalize those for the single parameter \cite{Ito:2010a}, and
the proofs are similar and thus omitted.

A first result shows the continuity and concavity of $F$.
\begin{lem}\label{lem:F}
The value function $F(\bs\eta)$ is monotonically increasing in the sense
$F(\hat{\bs\eta})\le F(\bs\eta) $ if $\hat{\bs\eta} \le \bs\eta$, and it is concave.
\end{lem}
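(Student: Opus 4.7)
The plan is to exploit the fact that for each fixed admissible $x\in Q_{\text{ad}}$, the Tikhonov functional $J_{\bs\eta}(x)=\phi(x,y^\delta)+\bs\eta\cdot\bs\psi(x)$ is an affine function of the parameter vector $\bs\eta$, with nonnegative ``slope'' $\bs\psi(x)\ge 0$ coming from the standing nonnegativity assumption on the penalties $\psi_i$. Both monotonicity and concavity of $F(\bs\eta)=\inf_{x\in Q_{\text{ad}}}J_{\bs\eta}(x)$ should then follow by transferring these pointwise properties through the infimum.

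For monotonicity, I would fix an arbitrary $x\in Q_{\text{ad}}$ and note that since $\psi_i(x)\ge 0$ for each $i$, the coordinatewise inequality $\hat{\bs\eta}\le\bs\eta$ gives $\hat{\bs\eta}\cdot\bs\psi(x)\le\bs\eta\cdot\bs\psi(x)$, hence $J_{\hat{\bs\eta}}(x)\le J_{\bs\eta}(x)$. Taking the infimum over $Q_{\text{ad}}$ on both sides preserves the inequality and delivers $F(\hat{\bs\eta})\le F(\bs\eta)$.

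For concavity, I would fix $\bs\eta,\hat{\bs\eta}$ and $t\in[0,1]$ and use the linearity of $\bs\eta\mapsto\bs\eta\cdot\bs\psi(x)$ together with the trivial decomposition $\phi(x,y^\delta)=t\phi(x,y^\delta)+(1-t)\phi(x,y^\delta)$ to obtain, for every $x\in Q_{\text{ad}}$,
\begin{equation*}
J_{t\bs\eta+(1-t)\hat{\bs\eta}}(x)=tJ_{\bs\eta}(x)+(1-t)J_{\hat{\bs\eta}}(x).
\end{equation*}
Applying the elementary inequality $\inf_x(f(x)+g(x))\ge\inf_x f(x)+\inf_x g(x)$ to the two scaled functionals then yields $F(t\bs\eta+(1-t)\hat{\bs\eta})\ge tF(\bs\eta)+(1-t)F(\hat{\bs\eta})$, which is the desired concavity.

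\textbf{Main obstacle.} There is effectively no real obstacle here: both claims reduce to the well-known fact that the pointwise infimum of a family of affine functions (indexed by $x\in Q_{\text{ad}}$) is automatically concave, and is monotone in any direction along which the slopes are nonnegative. The only subtlety worth flagging is that the existence of a minimizer $x_{\bs\eta}^\delta$ is \emph{not} required anywhere in the argument; everything is done at the level of the infimum, which is exactly what the setup of \eqref{valuef} anticipates. Continuity of $F$, alluded to in the prose preceding the lemma, is then inherited from concavity on the interior of the effective domain and need not be argued separately.
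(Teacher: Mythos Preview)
Your argument is correct and is precisely the standard route: for each fixed $x$ the map $\bs\eta\mapsto J_{\bs\eta}(x)$ is affine with nonnegative slope $\bs\psi(x)$, so the pointwise infimum is concave and componentwise nondecreasing, with no need for a minimizer to exist. The paper in fact omits the proof entirely (deferring to the single-parameter case in \cite{Ito:2010a}), so there is nothing to compare against; your write-up is exactly what one would supply to fill the gap.
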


\begin{rmk}
Lemma \ref{lem:F} does not require the existence of $x\in Q_{\mathrm{ad}}$ that achieves
the infimum of $J_{\bs\eta}$. The results are also true for nonlinear operators and in
the presence of convex constraint $Q_\mathrm{ad}$.
\end{rmk}

Next we examine the properties of the value function $F$ more closely. Recall first
one-sided partial derivatives $\partial^\pm_i F$ are defined by
\begin{equation*}
\begin{aligned}
\partial^{-}_iF(\bs{\eta}) =
\lim_{h\rightarrow 0^+} \frac{F({\bs\eta})-F({\bs\eta}-h\bs{e}_i)}{h},\\
\partial^{+}_iF({\bs\eta}) = \lim_{h \rightarrow 0^+} \frac{F({\bs\eta}+h\bs{e}_i)-F({\bs\eta})}{h},
\end{aligned}
\end{equation*}
where $\bs e_i$ is the $i$th canonical basis.

The next result shows some properties, i.e., existence, nonnegativity, monotonicity and
(left- and right-) continuity, of the one-side partial derivatives $\partial_i^\pm F$.
The properties follow directly from Lemma \ref{lem:F}.
\begin{lem}For any $\bs\eta>0$, there hold
\begin{itemize}
  \item[$(i)$] The one-sided partial derivatives $\partial_i^\pm F(\bs\eta)$ exist,
      and $\partial_i^{\pm}F(\bs\eta)\ge 0$;
  \item[$(ii)$] For any $h>0$, there holds $0\leq\partial_i^{+}F(\bs\eta+h\bs
      e_i)\leq \partial_i^{-}F(\bs\eta+h\bs e_i) \leq
      \partial_i^{+}F(\bs\eta)\leq \partial_i^{-}F(\bs\eta)$;
  \item[$(iii)$] $\displaystyle\partial_i^{-}F(\bs\eta)= \lim_{h\rightarrow0^+}
      \partial_i^{-}F(\bs\eta-h\bs e_i)$ and $\displaystyle \partial_i^{+}F(\bs\eta)
      = \lim_{h \rightarrow 0^+}\partial_i^{+}F(\bs\eta+h\bs e_i)$.
\end{itemize}
\end{lem}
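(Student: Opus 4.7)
The plan is to reduce all three assertions to standard one-variable concavity arguments. Fix $\bs\eta>0$ and an index $i$, and introduce $g(t)=F(\bs\eta+t\bs e_i)$ on an open neighborhood of $t=0$ in which $\bs\eta+t\bs e_i>0$ (that is, $t>-\eta_i$). By Lemma~\ref{lem:F}, $g$ is concave (restriction of a concave function to a line) and nondecreasing, and $\partial_i^\pm F(\bs\eta+s\bs e_i)$ coincides with $g'_\pm(s)$. Each assertion therefore becomes a claim about the one-sided derivatives of a monotone concave function of a single real variable on an open interval.

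For (i), the chord slope $(g(h)-g(0))/h$ is monotone in $h$ by concavity, so the one-sided limits exist; finiteness uses that $t=0$ lies in the interior of the domain, and nonnegativity follows from monotonicity of $g$. For (ii), both the pointwise inequality $g'_+(s)\le g'_-(s)$ and the crossover $g'_-(t)\le g'_+(s)$ for $s<t$ follow from the standard chord sandwich
\[
g'_+(s)\ \ge\ \frac{g(t)-g(s)}{t-s}\ \ge\ g'_-(t),\qquad s<t,
\]
which is itself a direct consequence of monotonicity of chord slopes in either endpoint. Applying this with $s=0$ and $t=h$ yields the full chain in (ii).

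For (iii), I would prove the left-continuity of $g'_-$ at $0$ by a sandwich argument; the right-continuity of $g'_+$ is symmetric. By (ii), $g'_-(-h)$ is nonincreasing as $h\to 0^+$ and bounded below by $g'_-(0)$, so its limit $L$ satisfies $L\ge g'_-(0)$. For the reverse bound, fix $h_0>0$ and, for any $0<h<h_0$, apply the chord sandwich to $-h_0<-h$ to obtain $g'_-(-h)\le (g(-h)-g(-h_0))/(h_0-h)$. Letting $h\to 0^+$ gives $L\le (g(0)-g(-h_0))/h_0$; letting $h_0\to 0^+$ and using $(g(0)-g(-h_0))/h_0\to g'_-(0)$ by definition yields $L\le g'_-(0)$, so equality holds.

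The main obstacle is really only notational: keeping straight the direction of monotonicity of $g'_\pm$ versus the direction of the chord inequalities, so that the sandwich in (iii) goes the correct way and the iterated limit ($h\to 0^+$ then $h_0\to 0^+$) collapses to $g'_-(0)$ rather than stalling at some intermediate chord slope. Once the reduction to the one-variable function $g$ is in hand, every remaining step is routine one-dimensional convex analysis.
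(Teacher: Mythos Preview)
Your proof is correct and follows precisely the route the paper indicates: the paper states only that the properties follow directly from Lemma~\ref{lem:F} (monotonicity and concavity of $F$), and your reduction to the one-variable function $g(t)=F(\bs\eta+t\bs e_i)$ together with standard chord-slope arguments for concave functions is exactly the expansion of that claim. The paper omits these details entirely, so your write-up simply supplies what the paper leaves implicit.
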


\begin{rmk} The partial differentiability of $F$ in the $i$-th direction at ${\bs{\eta}}$
guarantees the continuity of $\partial_i^{\pm}F$ at this point. Indeed, the monotonicity
of $\partial_i^{\pm}F$ and the left continuity of $\partial_i^+F$ yield the inequalities
\begin{equation*}
\partial_i^+F({\bs{\eta}}) = \lim_{h\rightarrow 0^+}\partial_i^+F({\bs{\eta}}+h\bs
e_i)\le \lim_{h\rightarrow 0^+}\partial_i^-F({\bs{\eta}}+h\bs e_i) \le
\partial_i^-F({\bs{\eta}}).
\end{equation*}
Now suppose $F$ is differentiable at ${\bs{\eta}}$, i.e.,
$\partial_i^{-}F({\bs{\eta}})=\partial_i^{+}F({\bs{\eta}})$. Then from the inequalities
it follows that
\begin{equation*}
\lim_{h\rightarrow 0^+}\partial_i^-F({\bs{\eta}}+h\bs e_i) =\partial_i^-F({\bs{\eta}}),
\end{equation*}
which shows the continuity of $\partial_i^{-}F$ at ${\bs{\eta}}$. Similarly it follows
that $\partial_i^+F$ is continuous at ${\bs{\eta}}$.
\end{rmk}

The asymptotic behavior of $F(\bs\eta)$ is useful for designing numerical algorithms.
\begin{prop}\label{prop:asymp}
The following asymptotics of $F$ hold
\begin{equation*}
\lim_{|\bs\eta|\rightarrow0}F(\bs\eta)=\inf_{x\in
Q_{\text{ad}}}\phi(x,y^\delta) \quad\mbox{and}\quad \lim_{|\bs\eta|\rightarrow
0}\eta_i \partial_i^{\pm}F(\bs\eta)=0.
\end{equation*}
\end{prop}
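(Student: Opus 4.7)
My strategy is to sandwich $F(\bs\eta)$ between elementary bounds that come from the defining infimum in \eqref{valuef}, then combine those bounds with the monotonicity and concavity of Lemma \ref{lem:F} to extract the derivative estimate.

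\emph{Step 1 (the zeroth-order limit).} For the lower bound, monotonicity (Lemma \ref{lem:F}) gives $F(\bs\eta)\ge F(\bs 0)=\inf_{x\in Q_\mathrm{ad}}\phi(x,y^\delta)$ for every $\bs\eta\ge\bs 0$. For the matching upper bound, fix $\varepsilon>0$ and choose $x_\varepsilon\in Q_\mathrm{ad}$ satisfying $\phi(x_\varepsilon,y^\delta)\le\inf_{x\in Q_\mathrm{ad}}\phi(x,y^\delta)+\varepsilon$ and $\bs\psi(x_\varepsilon)$ finite; then
\begin{equation*}
F(\bs\eta)\le J_{\bs\eta}(x_\varepsilon)=\phi(x_\varepsilon,y^\delta)+\bs\eta\cdot\bs\psi(x_\varepsilon).
\end{equation*}
Letting $|\bs\eta|\to 0$ the penalty contribution vanishes, and then $\varepsilon\to 0$ gives the first claim.

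\emph{Step 2 (the first-order limit).} Consider the one-dimensional restriction $g_i(t):=F(\bs\eta-(\eta_i-t)\bs e_i)$, which agrees with $F$ on $\bs\eta$ when $t=\eta_i$ and on $\bs\eta-\eta_i\bs e_i$ when $t=0$, and is concave in $t$ by Lemma \ref{lem:F}. Its left derivative at $t=\eta_i$ is exactly $\partial_i^-F(\bs\eta)$, and every super-gradient $\xi$ of a concave function at a point $t$ satisfies $g_i(s)\le g_i(t)+\xi(s-t)$. Applying this with $\xi=\partial_i^-F(\bs\eta)$, $t=\eta_i$, $s=0$ yields
\begin{equation*}
\eta_i\,\partial_i^-F(\bs\eta)\le F(\bs\eta)-F(\bs\eta-\eta_i\bs e_i).
\end{equation*}
By Step 1 both $F(\bs\eta)$ and $F(\bs\eta-\eta_i\bs e_i)$ tend to $\inf_{x\in Q_\mathrm{ad}}\phi(x,y^\delta)$ as $|\bs\eta|\to 0$ (the second argument still tends to $\bs 0$ in norm), so the right-hand side vanishes; together with the sandwich $0\le\eta_i\partial_i^+F(\bs\eta)\le\eta_i\partial_i^-F(\bs\eta)$ from the preceding lemma, this forces both $\eta_i\partial_i^\pm F(\bs\eta)\to 0$.

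\emph{Anticipated obstacle.} The only non-mechanical point is the existence in Step 1 of a near-minimizer $x_\varepsilon$ of $\phi(\cdot,y^\delta)$ with $\bs\psi(x_\varepsilon)<\infty$; for the standard penalties listed in the introduction this holds by density of the effective domain of $\bs\psi$, and in the abstract setting it is an implicit feasibility assumption on $(Q_\mathrm{ad},\phi,\bs\psi)$. Given this, the rest reduces to the concavity/monotonicity already at hand, so the proof is essentially a copy of the single-parameter case alluded to after Lemma \ref{lem:F}.
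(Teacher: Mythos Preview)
The paper does not actually supply a proof of Proposition~\ref{prop:asymp}: immediately after \eqref{valuef} it states that ``the proofs are similar [to the single-parameter case] and thus omitted,'' so there is nothing to compare against line by line. Your argument is the natural one and is correct. Step~1 is the standard sandwich (monotonicity for the lower bound, evaluation at a near-minimizer for the upper bound), and Step~2 is exactly the supergradient inequality for the concave one-variable section $t\mapsto F(\bs\eta+(t-\eta_i)\bs e_i)$, which yields $0\le\eta_i\partial_i^{+}F(\bs\eta)\le\eta_i\partial_i^{-}F(\bs\eta)\le F(\bs\eta)-F(\bs\eta-\eta_i\bs e_i)\to 0$ by Step~1. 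The one genuine hypothesis you need---that near-minimizers of $\phi(\cdot,y^\delta)$ over $Q_{\mathrm{ad}}$ can be chosen with $\bs\psi$ finite---is precisely the point you flag; without it the first limit can fail, and the paper is tacitly assuming it as well.
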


The partial derivatives $\partial_i^\pm F$ are closely connected to the fidelity $\phi$
and penalty $\bs\psi$ under the assumption of existence of a minimizer, i.e., the set
$\mathcal{M}_{\bs\eta}$ is nonempty. This is guaranteed by:
\begin{ass}\label{ass:phipsi} The functionals $\phi$ and $\psi_i$ satisfy:
\begin{itemize}
\item[$(i)$] For any sequence $\{x_n\}_n\subset Q_\mathrm{ad}$ such that $\phi$ and
    $\psi_i$ for all $1\le i \le n$ are uniformly bounded, there exists a subsequence
    $\{x_{n_k}\}_k$ which converges to an element $x^*\in Q_{ad}$ in the
    $\tau$-topology.
\item[$(ii)$] $\phi$ and $\psi_i$ are lower semi-continuous with respect to
    $\tau$-convergent sequences, i.e., if a subsequence $\{x_{n}\}_n$ converges to
    $x^*\in Q_\mathrm{ad}$ in $\tau$-topology, then
\begin{equation*}
\phi(x^*)\leq\liminf_{n \to \infty}\phi(x_{n}) \mbox{ and }\,
\psi_i(x^*)\leq\liminf_{n \to \infty }\psi_i(x_{n}).
\end{equation*}
\end{itemize}
\end{ass}

In case that the set $\mathcal{M}_{\bs\eta}$ contains multiple elements, there might
exist distinct $x_{\bs\eta}^\delta,\hat{x}_{\bs\eta}^\delta\in\mathcal{M}_\eta$ such that
\begin{equation*}
F(\eta)=\phi(x_{\bs\eta}^\delta,y^\delta)+\bs\eta\cdot\bs\psi(x_{\bs\eta}^\delta)=\phi(\hat
x_{\bs\eta}^\delta,y^\delta)+\bs\eta\cdot\bs\psi(\hat x_{\bs\eta}^\delta)
\quad \mbox{but}\quad
\phi(x_{\bs\eta}^\delta,y^\delta)\neq\phi(\hat x_{\bs\eta}^\delta,y^\delta),
\end{equation*}
i.e., the functions $\phi(x_{\bs\eta}^\delta,y^\delta)$ and $\bs\psi(x_{\bs\eta}^\delta)$
are potentially multi-valued in $\bs\eta$.

A first relation between $\psi_i$ and $\partial_i^{\pm} F$ is given by
\begin{lem}\label{lem:DF}
Let Assumption \ref{ass:phipsi} be fulfilled. Then for any
$x_{\bs{\eta}}^\delta\in\mathcal{M}_{\bs{\eta}}$, there hold
\begin{align*}
\partial_i^{+}F({\bs{\eta}})\leq\ &\psi_i(x_{\bs{\eta}}^\delta) \leq\partial_i^{-}F({\bs{\eta}})\quad i=1,\ldots,n,\\
F({\bs{\eta}})-\sum_{i=1}^n\eta_i\partial_i^{-}F(\bs\eta)\leq \
&\phi(x_{\bs\eta}^\delta,y^\delta) \leq F(\bs\eta)-\sum_{i=1}^n\eta_i
\partial_i^{+}F(\bs\eta).
\end{align*}
\end{lem}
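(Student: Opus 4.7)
The plan is to exploit the optimality of $x_{\bs\eta}^\delta$ as a minimizer of $J_{\bs\eta}$: for any perturbed parameter vector $\bs\eta'$ one has $F(\bs\eta') \le J_{\bs\eta'}(x_{\bs\eta}^\delta) = \phi(x_{\bs\eta}^\delta,y^\delta) + \bs\eta'\cdot\bs\psi(x_{\bs\eta}^\delta)$, with equality at $\bs\eta' = \bs\eta$. In effect, the affine function $\bs\eta'\mapsto \phi(x_{\bs\eta}^\delta,y^\delta)+\bs\eta'\cdot\bs\psi(x_{\bs\eta}^\delta)$ majorizes the concave function $F$ and touches it at $\bs\eta$, so $\bs\psi(x_{\bs\eta}^\delta)$ acts as a supergradient of $F$ at $\bs\eta$. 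The two sets of claimed inequalities are then the statement that this supergradient is sandwiched by the one-sided partial derivatives, together with the bookkeeping identity $F(\bs\eta)=\phi(x_{\bs\eta}^\delta,y^\delta)+\bs\eta\cdot\bs\psi(x_{\bs\eta}^\delta)$.

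To carry this out, first I would fix an index $i$ and $h>0$ and apply the suboptimality inequality with $\bs\eta'=\bs\eta+h\bs e_i$ to obtain
\begin{equation*}
F(\bs\eta+h\bs e_i)-F(\bs\eta)\le h\,\psi_i(x_{\bs\eta}^\delta).
\end{equation*}
Dividing by $h$ and letting $h\to 0^+$ gives $\partial_i^{+}F(\bs\eta)\le \psi_i(x_{\bs\eta}^\delta)$, with existence of the limit guaranteed by the preceding lemma. Taking $\bs\eta'=\bs\eta-h\bs e_i$ for $h>0$ small enough that $\bs\eta-h\bs e_i>0$ yields $F(\bs\eta)-F(\bs\eta-h\bs e_i)\ge h\,\psi_i(x_{\bs\eta}^\delta)$, and passing to the limit produces $\partial_i^{-}F(\bs\eta)\ge \psi_i(x_{\bs\eta}^\delta)$. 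This completes the first pair of inequalities.

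For the second pair I would start from the identity
\begin{equation*}
\phi(x_{\bs\eta}^\delta,y^\delta)=F(\bs\eta)-\sum_{j=1}^n\eta_j\,\psi_j(x_{\bs\eta}^\delta),
\end{equation*}
which holds since $x_{\bs\eta}^\delta$ achieves the infimum defining $F(\bs\eta)$. Multiplying the already-established bounds $\partial_j^{+}F(\bs\eta)\le \psi_j(x_{\bs\eta}^\delta)\le \partial_j^{-}F(\bs\eta)$ by $\eta_j>0$ and summing over $j$ sandwiches $\sum_j\eta_j\psi_j(x_{\bs\eta}^\delta)$ between $\sum_j\eta_j\partial_j^{+}F(\bs\eta)$ and $\sum_j\eta_j\partial_j^{-}F(\bs\eta)$; substituting into the identity reverses the inequalities (because of the minus sign) and produces the desired two-sided bound on $\phi(x_{\bs\eta}^\delta,y^\delta)$.

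Because the argument is essentially a supergradient calculus for the concave function $F$, there is no substantive obstacle: Assumption \ref{ass:phipsi} enters only implicitly via the nonemptiness of $\mathcal{M}_{\bs\eta}$, which is a hypothesis of the lemma, and the only point requiring a little care is choosing $h$ small enough in the backward direction to keep $\bs\eta-h\bs e_i$ in the positive orthant so that the identities $F(\bs\eta\pm h\bs e_i)\le J_{\bs\eta\pm h\bs e_i}(x_{\bs\eta}^\delta)$ remain valid.
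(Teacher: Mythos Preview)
Your argument is correct and is exactly the standard supergradient argument for the value function of a parametrized infimum that is affine in the parameter. The paper itself omits the proof of this lemma, stating only that it generalizes the single-parameter case in \cite{Ito:2010a} with a similar proof; your derivation is precisely the natural such generalization, so there is nothing to contrast.
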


An immediate consequence of Lemma \ref{lem:DF} is:
\begin{cor}\label{cor:DF=psi}
Let Assumption \ref{ass:phipsi} be fulfilled. If $\partial_iF(\bs\eta)$ exists $\bs\eta$
for all $i$, then $\psi_i(x_{\bs\eta}^\delta)$ and $\phi(x_{\bs\eta}^\delta,y^\delta)$
are single valued at $\bs\eta$ and
\begin{equation*}
  \partial_iF(\bs\eta)=\psi_i(x_{\bs\eta}^\delta) \quad \mbox{and}\quad
  F(\bs\eta)-\bs\eta\cdot\partial F(\bs\eta)=\phi(x_{\bs\eta}^\delta,y^\delta)\quad\forall
  x_{\bs\eta}^\delta\in\mathcal{M}_{\bs\eta}.
\end{equation*}
\end{cor}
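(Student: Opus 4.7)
The plan is to read the conclusion off Lemma \ref{lem:DF} by collapsing the two-sided inequalities there when the one-sided partial derivatives agree. Specifically, the hypothesis that $\partial_i F(\bs\eta)$ exists for all $i$ means $\partial_i^+ F(\bs\eta) = \partial_i^- F(\bs\eta) = \partial_i F(\bs\eta)$, so the sandwich
\[
\partial_i^+ F(\bs\eta) \leq \psi_i(x_{\bs\eta}^\delta) \leq \partial_i^- F(\bs\eta)
\]
from Lemma \ref{lem:DF}, which holds for every $x_{\bs\eta}^\delta \in \mathcal{M}_{\bs\eta}$, degenerates to the equality $\psi_i(x_{\bs\eta}^\delta) = \partial_i F(\bs\eta)$. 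Because the right-hand side depends only on $\bs\eta$, this simultaneously proves single-valuedness of $\psi_i(x_{\bs\eta}^\delta)$ on $\mathcal{M}_{\bs\eta}$ and the first identity.

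For the fidelity identity, I would give two equivalent derivations and pick the cleaner one. The direct route: apply the same collapse to the second sandwich in Lemma \ref{lem:DF},
\[
F(\bs\eta) - \sum_{i=1}^n \eta_i\, \partial_i^- F(\bs\eta) \leq \phi(x_{\bs\eta}^\delta, y^\delta) \leq F(\bs\eta) - \sum_{i=1}^n \eta_i\, \partial_i^+ F(\bs\eta),
\]
which forces $\phi(x_{\bs\eta}^\delta,y^\delta) = F(\bs\eta) - \bs\eta \cdot \partial F(\bs\eta)$. Alternatively, since $x_{\bs\eta}^\delta \in \mathcal{M}_{\bs\eta}$ gives $F(\bs\eta) = \phi(x_{\bs\eta}^\delta, y^\delta) + \bs\eta \cdot \bs\psi(x_{\bs\eta}^\delta)$, substituting the already-proved identity $\psi_i(x_{\bs\eta}^\delta) = \partial_i F(\bs\eta)$ yields the same conclusion; this also makes single-valuedness of $\phi(x_{\bs\eta}^\delta, y^\delta)$ transparent.

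There is essentially no obstacle: the corollary is a one-step squeeze from Lemma \ref{lem:DF}, and the only thing to be careful about is emphasizing that the bounds in Lemma \ref{lem:DF} hold for \emph{every} element of $\mathcal{M}_{\bs\eta}$, so that the equality obtained after the collapse automatically upgrades to single-valuedness on the whole set of minimizers rather than at a single one.
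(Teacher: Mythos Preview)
Your proposal is correct and matches the paper's approach exactly: the paper presents this corollary as an immediate consequence of Lemma~\ref{lem:DF} without further proof, and your collapse of the sandwich inequalities under the hypothesis $\partial_i^+F(\bs\eta)=\partial_i^-F(\bs\eta)$ is precisely that immediate consequence. Your care in noting that the bounds hold for every element of $\mathcal{M}_{\bs\eta}$, so that single-valuedness follows, is the only point worth making explicit, and you have done so.
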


More precisely, the partial derivatives $\partial^\pm_i$ can be expressed by $\psi_i$ as
follows.
\begin{thm}\label{thm:difful}
Let Assumption \ref{ass:phipsi} hold. Then for any $\bs\eta>0$ and every $i$, there exist
$x^+_i,x^-_i\in\mathcal{M}_{\bs\eta}$ such that
\begin{equation*}
\psi_i(x^+_i)=\partial_i^+F(\bs\eta)\quad\mbox{and}\quad \psi_i(x^-_i)=\partial_i^-F(\bs\eta).
\end{equation*}
\end{thm}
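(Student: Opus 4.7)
My plan is to construct $x_i^+$ and $x_i^-$ as limits of minimizers at suitably perturbed parameters, and then use the sandwich that Lemma \ref{lem:DF} combined with parts $(ii)$ and $(iii)$ of the preceding partial-derivative lemma already provides for $\psi_i$ evaluated at those perturbed minimizers.

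For the $+$ case, I would pick a sequence $h_k \downarrow 0$ and, for each $k$, a minimizer $x_k \in \mathcal{M}_{\bs\eta+h_k\bs e_i}$ (which exists under Assumption \ref{ass:phipsi}). Applying Lemma \ref{lem:DF} at $\bs\eta+h_k\bs e_i$ gives
\[
\partial_i^+F(\bs\eta+h_k\bs e_i)\le \psi_i(x_k)\le \partial_i^-F(\bs\eta+h_k\bs e_i),
\]
and property $(ii)$ upgrades the right-hand side to $\partial_i^+F(\bs\eta)$, while property $(iii)$ forces the left-hand side to converge to $\partial_i^+F(\bs\eta)$. So $\psi_i(x_k)\to \partial_i^+F(\bs\eta)$. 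Since $F(\bs\eta+h_k\bs e_i)\to F(\bs\eta)$ by concavity/continuity of $F$ and $h_k\psi_i(x_k)\to 0$, the identity
\[
J_{\bs\eta}(x_k)=F(\bs\eta+h_k\bs e_i)-h_k\psi_i(x_k)
\]
yields $J_{\bs\eta}(x_k)\to F(\bs\eta)$. From the uniform bounds on $\phi(x_k)$ and $\psi_j(x_k)$, Assumption \ref{ass:phipsi}$(i)$ furnishes a $\tau$-limit $x_i^+\in Q_{\mathrm{ad}}$ along a subsequence, and lower semicontinuity gives $J_{\bs\eta}(x_i^+)\le \liminf_l J_{\bs\eta}(x_{k_l})=F(\bs\eta)$, so $x_i^+\in\mathcal{M}_{\bs\eta}$.

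The crux is then promoting $\lim_k\psi_i(x_k)=\partial_i^+F(\bs\eta)$ to $\psi_i(x_i^+)=\partial_i^+F(\bs\eta)$, since only lower semicontinuity of $\psi_i$ is available. The trick I would use is that $J_{\bs\eta}(x_i^+)=F(\bs\eta)=\lim_l J_{\bs\eta}(x_{k_l})$ combined with the componentwise lower semicontinuity inequalities
\[
\phi(x_i^+)\le \liminf_l \phi(x_{k_l}),\qquad \psi_j(x_i^+)\le \liminf_l \psi_j(x_{k_l})\quad (1\le j\le n),
\]
and the superadditivity $\liminf_l\phi(x_{k_l})+\sum_j\eta_j\liminf_l\psi_j(x_{k_l})\le \liminf_l J_{\bs\eta}(x_{k_l})=F(\bs\eta)$, squeezes all these into equalities (using $\eta_j>0$). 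In particular $\psi_i(x_i^+)=\lim_l\psi_i(x_{k_l})=\partial_i^+F(\bs\eta)$, as wanted.

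The $-$ case is entirely symmetric: take $h_k\downarrow 0$, choose $x_k\in\mathcal{M}_{\bs\eta-h_k\bs e_i}$ (legitimate for small $h_k$ since $\bs\eta>0$), and note that property $(ii)$ applied at $\bs\eta-h_k\bs e_i$ gives $\partial_i^-F(\bs\eta)\le \partial_i^+F(\bs\eta-h_k\bs e_i)\le\psi_i(x_k)\le\partial_i^-F(\bs\eta-h_k\bs e_i)$, with the upper bound tending to $\partial_i^-F(\bs\eta)$ by $(iii)$. The same extraction and squeeze argument then produces $x_i^-\in\mathcal{M}_{\bs\eta}$ with $\psi_i(x_i^-)=\partial_i^-F(\bs\eta)$. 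I expect the only nontrivial step is the sum-equality argument in the third paragraph, which is what lets one-sided lower semicontinuity play the role of continuity for each $\psi_j$ along the specific sequence obtained from Assumption \ref{ass:phipsi}$(i)$.
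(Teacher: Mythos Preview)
The paper does not actually supply a proof of this theorem: immediately after introducing the value function it states that ``the results presented below generalize those for the single parameter \cite{Ito:2010a}, and the proofs are similar and thus omitted.'' So there is nothing in the paper to compare your argument against directly.

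That said, your argument is correct and is precisely the kind of proof one expects here (and presumably the one the authors have in mind from the single-parameter case). The key steps---taking minimizers at perturbed parameters $\bs\eta\pm h_k\bs e_i$, using the monotonicity/one-sided-continuity lemma together with Lemma~\ref{lem:DF} to force $\psi_i(x_k)\to\partial_i^{\pm}F(\bs\eta)$, extracting a $\tau$-limit via Assumption~\ref{ass:phipsi}(i), and then using the sum-equality squeeze to upgrade lower semicontinuity to actual convergence of $\psi_i$ along the chosen subsequence---are all sound. The only point worth making explicit when you write it up is the uniform bound on \emph{all} $\psi_j(x_k)$ and on $\phi(x_k,y^\delta)$ needed to invoke Assumption~\ref{ass:phipsi}(i): this follows because $F(\bs\eta\pm h_k\bs e_i)$ stays bounded and the coefficients $\eta_j\pm h_k\delta_{ij}$ are bounded below by a positive constant for small $h_k$ (since $\bs\eta>0$). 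You allude to this, but it is the place a careful reader will pause.
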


Theorem \ref{thm:difful} in conjunction with Lemma \ref{lem:DF} implies the following
corollary.
\begin{cor}Let Assumption \ref{ass:phipsi} hold. Then
\begin{itemize}
  \item[$(i)$] There exist $x^{+}_i,x^{-}_i\in\mathcal{M}_{\bs\eta}$ such that
      $\displaystyle \psi_i(x^{+}_i) = \min_{x \in \mathcal{M}_{\bs{\eta}}}\psi_i(x)$
      and $\displaystyle \psi_i(x^{-}_i) = \max_{x \in
     \mathcal{M}_{\bs{\eta}}}\psi_i(x)$.
  \item[$(ii)$] If $\psi_i(x_{\bs{\eta}}^\delta)=\psi_i(\hat{x}_{\bs{\eta}}^\delta)$
      for all $x_{\bs{\eta}}^\delta, \hat{x}_{\bs{\eta}}^\delta\in
      \mathcal{M}_{\bs{\eta}}$ for all ${\bs{\eta}}>0$, then
      $\partial_iF({\bs{\eta}})$ exists and it is continuous.
\end{itemize}
\end{cor}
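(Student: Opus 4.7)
The plan is to reduce both statements to combining Theorem \ref{thm:difful} with the sandwich inequality in Lemma \ref{lem:DF}, and then to invoke the remark following the monotonicity/continuity lemma for the partial derivatives $\partial_i^{\pm}F$.

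For part (i), I would first apply Theorem \ref{thm:difful} to produce $x^+_i, x^-_i \in \mathcal{M}_{\bs\eta}$ with $\psi_i(x^+_i)=\partial_i^+F(\bs\eta)$ and $\psi_i(x^-_i)=\partial_i^-F(\bs\eta)$. Then Lemma \ref{lem:DF} gives, for every $x \in \mathcal{M}_{\bs\eta}$, the bracketing
\begin{equation*}
\partial_i^+F(\bs\eta) \le \psi_i(x) \le \partial_i^-F(\bs\eta).
\end{equation*}
Comparing the two, $\psi_i(x^+_i) \le \psi_i(x) \le \psi_i(x^-_i)$ for all $x \in \mathcal{M}_{\bs\eta}$, so $x^+_i$ realizes the minimum and $x^-_i$ the maximum of $\psi_i$ on $\mathcal{M}_{\bs\eta}$. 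This step is essentially bookkeeping with the two earlier results.

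For part (ii), the hypothesis says $\psi_i$ is constant on $\mathcal{M}_{\bs\eta}$ for every $\bs\eta>0$. Invoking part (i) at a fixed $\bs\eta$, the minimum and maximum of $\psi_i$ over $\mathcal{M}_{\bs\eta}$ coincide, hence $\partial_i^+F(\bs\eta)=\psi_i(x^+_i)=\psi_i(x^-_i)=\partial_i^-F(\bs\eta)$, which gives the existence of $\partial_iF(\bs\eta)$. Continuity is then exactly the statement recorded in the remark after Lemma 2.2: once $\partial_i^+F=\partial_i^-F$ holds at a point, the monotonicity and one-sided continuity of $\partial_i^{\pm}F$ force the common value to be both left- and right-continuous at that point. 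Applying this at every $\bs\eta>0$ gives continuity of $\partial_iF$ on the positive orthant.

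The main obstacle is not really a calculation but making sure one is allowed to apply Theorem \ref{thm:difful} (which supplies minimizers matching $\partial_i^{\pm}F$) and Lemma \ref{lem:DF} (which supplies the sandwich bound) under exactly the same Assumption \ref{ass:phipsi}; both are stated under that assumption, so the chain of implications goes through. Beyond that, the only subtlety is in (ii), where one must remember that continuity is really a pointwise statement obtained from the remark, and then upgraded to continuity on the whole positive orthant because the hypothesis holds uniformly in $\bs\eta$.
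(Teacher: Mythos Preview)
Your proposal is correct and follows exactly the route the paper indicates: the corollary is stated immediately after Theorem \ref{thm:difful} with the remark that it follows from that theorem ``in conjunction with Lemma \ref{lem:DF}'', and your argument spells this out precisely, including the appeal to the remark after Lemma 2.2 for the continuity assertion in part (ii).
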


The last result gives a sufficient condition for the differentiability of the value
function $F$. It plays an important role in especially designing an efficient algorithm
for certain choice rules, by e.g., Morozov's principle and balancing principle
\cite{JinZou:2010}.
\begin{thm}\label{thm:uniquediff}
Assume that the minimizer of the functional $J_{\bs\eta}$ is unique at ${\bs{\eta}}>0$.
Then the derivatives $\{\partial_iF({\bs{\eta}})\}_i$ exist and are continuous at
$\bs\eta$. In particular, $F$ is differentiable at ${\bs{\eta}}$.
\end{thm}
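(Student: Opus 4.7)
The plan is to derive both existence and continuity of each $\partial_i F(\bs\eta)$ from Theorem \ref{thm:difful}, and then to upgrade to full differentiability using the concavity of $F$ recorded in Lemma \ref{lem:F}.

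First, I would establish existence of the partial derivatives. By Theorem \ref{thm:difful}, for each coordinate index $i$ there exist minimizers $x_i^+,x_i^-\in\mathcal{M}_{\bs\eta}$ with $\psi_i(x_i^+)=\partial_i^+F(\bs\eta)$ and $\psi_i(x_i^-)=\partial_i^-F(\bs\eta)$. Since by hypothesis $\mathcal{M}_{\bs\eta}$ is a singleton, we must have $x_i^+=x_i^-=x_{\bs\eta}^\delta$, and therefore $\partial_i^+F(\bs\eta)=\partial_i^-F(\bs\eta)=\psi_i(x_{\bs\eta}^\delta)$. Thus each $\partial_i F(\bs\eta)$ exists, which proves the first half of the conclusion and also identifies its value with $\psi_i(x_{\bs\eta}^\delta)$, consistent with Corollary \ref{cor:DF=psi}.

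Second, I would argue continuity of each $\partial_i F$ at $\bs\eta$. Along the $i$-th coordinate axis this is the content of the remark preceding Proposition \ref{prop:asymp}: the squeeze $\partial_i^+F(\bs\eta+h\bs e_i)\le\partial_i^-F(\bs\eta+h\bs e_i)\le\partial_i^+F(\bs\eta)\le\partial_i^-F(\bs\eta)$ from the monotonicity of the one-sided derivatives, combined with their one-sided continuity, forces $\lim_{h\to 0^+}\partial_i^\pm F(\bs\eta\pm h\bs e_i)=\partial_i F(\bs\eta)$ once both one-sided derivatives already coincide at $\bs\eta$. To promote this to continuity under arbitrary perturbations $\bs\mu\to\bs\eta$, and to obtain the Fr\'echet differentiability asserted by the ``in particular'' clause, I would invoke the classical convex-analytic fact that a concave function on an open subset of $\mathbb{R}^n$ possessing all $n$ partial derivatives at an interior point is automatically Fr\'echet differentiable there, with gradient continuous at that point as a single-valued selection of the superdifferential.

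The main obstacle, in my view, is precisely this upgrade from coordinate-wise continuity to continuity in the full $\mathbb{R}^n$ topology, since the preceding machinery manipulates one coordinate at a time. The cleanest route is the set-valued viewpoint: by Lemma \ref{lem:F} the superdifferential $\partial F$ is nonempty, convex, compact, and upper semicontinuous as a multifunction on a neighborhood of $\bs\eta$; uniqueness of the minimizer at $\bs\eta$ together with Corollary \ref{cor:DF=psi} collapses $\partial F(\bs\eta)$ to the singleton $\{(\psi_1(x_{\bs\eta}^\delta),\ldots,\psi_n(x_{\bs\eta}^\delta))\}$, and upper semicontinuity of a multifunction into a singleton is ordinary continuity. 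This simultaneously yields Fr\'echet differentiability of $F$ at $\bs\eta$ and continuity of the gradient there, completing the proof.
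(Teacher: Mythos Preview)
The paper omits the proof of this theorem, referring to the single-parameter case in \cite{Ito:2010a}, so there is no explicit argument to compare against; however, your proposal is correct and matches the route clearly indicated by the paper's scaffolding: existence of each $\partial_iF(\bs\eta)$ from Theorem~\ref{thm:difful} (uniqueness collapses $x_i^+=x_i^-$), and axial continuity from the Remark following the lemma on one-sided derivatives.

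Where you go slightly beyond the paper is in the final step. The paper's Remark and the Corollary~(ii) preceding Theorem~\ref{thm:uniquediff} treat continuity only along coordinate directions, and the paper does not spell out why mere existence of all partials yields Fr\'echet differentiability. Your invocation of the concavity of $F$ (Lemma~\ref{lem:F}) together with the standard convex-analytic fact---upper semicontinuity of the superdifferential, which collapses to ordinary continuity at a point where it is a singleton---cleanly handles both the upgrade to full continuity in $\mathbb{R}^n$ and the Fr\'echet differentiability in one stroke. This is the natural way to fill what the paper leaves implicit, and it is sound.
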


\section{Parameter choice rules}\label{sec:rule}
In this section, we discuss two choice rules, i.e., discrepancy principle
\cite{Morozov:1966,JinZou:2009} and balancing principle \cite{Ito:2010a}, for
multi-parameter models. For notational simplicity, we shall restrict our attention to the
case of two penalty terms.

\subsection{Discrepancy principle}

Here we investigate the discrepancy principle due to Morozov \cite{Morozov:1966} for
multi-parameter regularization. We shall assume a triangle-type inequality for the
functional $\phi$.
\begin{ass}\label{ass:phi}
The functional $\phi(x,y)$ vanishes if and only if $Kx=y$, and satisfies an inequality
$\phi(x,y)\leq c(\phi(x',y')+\phi(x,y'))$ for some constant $c$ and any $x'$ with
$Kx'=y$.
\end{ass}

The discrepancy principle determines an appropriate (vector-valued) regularization
parameter $\bs\eta$ by
\begin{equation}\label{eqn:morozov}
\phi(x_{\bs\eta}^\delta,y^\delta)=c_m\delta^2
\end{equation}
for some constant $c_m\geq1$. The rationale of the principle is that the solution
accuracy in terms of the residual should be compatible with the data accuracy (noise
level).

\begin{thm}\label{thm:morozov1}
Let Assumptions \ref{ass:phipsi} and \ref{ass:phi} be satisfied and the operator $K$ be
injective. Then for any $\bs\eta\equiv\bs\eta(\delta)$ satisfying \eqref{eqn:morozov} and
$c_0\leq\tfrac{\eta_1(\delta)}{\eta_2(\delta)}\leq c_1$ for some $c_0,c_1>0$, there holds
$\lim_{\delta\rightarrow0}x_{\bs\eta}^\delta=x^\dagger$ in $\tau$-topology.
\end{thm}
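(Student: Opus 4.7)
The plan is to run the classical Morozov consistency argument, using the ratio bound to decouple the two penalties. The starting point is the minimizing property: since $x_{\bs\eta}^\delta$ minimizes $J_{\bs\eta}$ and $x^\dagger$ is feasible, I would write
\begin{equation*}
\phi(x_{\bs\eta}^\delta, y^\delta) + \bs\eta\cdot\bs\psi(x_{\bs\eta}^\delta)
\le \phi(x^\dagger, y^\delta) + \bs\eta\cdot\bs\psi(x^\dagger)
\le \delta^2 + \bs\eta\cdot\bs\psi(x^\dagger).
\end{equation*}
Combining this with the discrepancy identity $\phi(x_{\bs\eta}^\delta,y^\delta)=c_m\delta^2$ and using $c_m\ge 1$, I obtain $\bs\eta\cdot\bs\psi(x_{\bs\eta}^\delta)\le \bs\eta\cdot\bs\psi(x^\dagger)$, i.e.\ $\eta_1\psi_1(x_{\bs\eta}^\delta)+\eta_2\psi_2(x_{\bs\eta}^\delta)\le \eta_1\psi_1(x^\dagger)+\eta_2\psi_2(x^\dagger)$.

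Next, I would use the ratio bound $c_0\le\eta_1/\eta_2\le c_1$ to get individual uniform bounds: dropping the nonnegative $\eta_2\psi_2(x_{\bs\eta}^\delta)$ term and dividing by $\eta_1$ gives $\psi_1(x_{\bs\eta}^\delta)\le \psi_1(x^\dagger)+(\eta_2/\eta_1)\psi_2(x^\dagger)\le \psi_1(x^\dagger)+c_0^{-1}\psi_2(x^\dagger)$, and symmetrically $\psi_2(x_{\bs\eta}^\delta)\le \psi_2(x^\dagger)+c_1\psi_1(x^\dagger)$. Together with the bound $\phi(x_{\bs\eta}^\delta,y^\delta)=c_m\delta^2\to 0$, Assumption~\ref{ass:phipsi}(i) then yields a subsequence (still indexed by $\delta$) converging to some $x^*\in Q_{\mathrm{ad}}$ in the $\tau$-topology.

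The third step is to identify $x^*$ with $x^\dagger$. For this I would invoke Assumption~\ref{ass:phi} with $x'=x^\dagger$, which gives
\begin{equation*}
\phi(x_{\bs\eta}^\delta, y^\dagger)\le c\bigl(\phi(x^\dagger, y^\delta)+\phi(x_{\bs\eta}^\delta, y^\delta)\bigr)\le c(1+c_m)\delta^2\to 0.
\end{equation*}
By the lower semicontinuity of $\phi(\cdot,y^\dagger)$ along $\tau$-convergent sequences (Assumption~\ref{ass:phipsi}(ii)), $\phi(x^*,y^\dagger)\le\liminf_{\delta\to 0}\phi(x_{\bs\eta}^\delta,y^\dagger)=0$, and the vanishing clause of Assumption~\ref{ass:phi} forces $Kx^*=y^\dagger=Kx^\dagger$. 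Injectivity of $K$ then gives $x^*=x^\dagger$. Finally, since the argument extracts a convergent subsequence with limit $x^\dagger$ from every subsequence of $\{x_{\bs\eta}^\delta\}_{\delta}$, the full net converges to $x^\dagger$ in the $\tau$-topology.

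The only delicate point is the decoupling step: one must guarantee that a bound on the scalar combination $\eta_1\psi_1(x_{\bs\eta}^\delta)+\eta_2\psi_2(x_{\bs\eta}^\delta)$ in terms of the analogous combination at $x^\dagger$ transfers to separate bounds on $\psi_1(x_{\bs\eta}^\delta)$ and $\psi_2(x_{\bs\eta}^\delta)$, since Assumption~\ref{ass:phipsi}(i) requires uniform boundedness of each penalty individually. This is precisely where the bilateral ratio bound $c_0\le\eta_1/\eta_2\le c_1$ is indispensable; without it one could have $\eta_1/\eta_2\to 0$ or $\infty$ and lose control of one of the penalties. Everything else is bookkeeping.
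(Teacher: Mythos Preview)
Your proof is correct and follows essentially the same route as the paper's: minimizing property plus discrepancy identity to bound the weighted penalty sum, ratio bound to decouple into individual bounds on $\psi_1$ and $\psi_2$, compactness and lower semicontinuity to identify the limit, and injectivity plus the subsequence argument to finish. Your decoupling step (drop one nonnegative term, divide, apply the ratio bound) is in fact slightly more direct than the paper's ``either $\psi_1(x_{\bs\eta}^\delta)\le\psi_1(x^\dagger)$ or $\psi_2(x_{\bs\eta}^\delta)\le\psi_2(x^\dagger)$'' detour, but the underlying idea is identical.
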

\begin{proof}
The minimizing property of $x_{\bs\eta}^\delta$ implies
\begin{equation*}
\begin{aligned}
\phi(x_{\bs\eta}^\delta,y^\delta)+\bs\eta\cdot\bs\psi(x_{\bs\eta}^\delta)
&\leq\phi(x^\dagger,y^\delta)+\bs\eta\cdot\bs\psi(x^\dagger)\\
&\leq\delta^2+\bs\eta\cdot\bs\psi(x^\dagger).
\end{aligned}
\end{equation*}
From the discrepancy equation \eqref{eqn:morozov}, we deduce
\begin{equation}\label{ieqn:morozov}
\bs\eta\cdot\bs\psi(x_{\bs\eta}^\delta)\leq\bs\eta\cdot\bs\psi(x^\dagger).
\end{equation}
Therefore, either $\psi_1(x_{\bs\eta}^\delta)\leq \psi_1(x^\dagger)$ or
$\psi_2(x_{\bs\eta}^\delta)\leq \psi_2(x^\dagger)$ holds. Now the assumption
$c_0\leq\frac{\eta_1(\delta)}{\eta_2(\delta)}\leq c_1$ implies that the sequence
$\{\psi_i(x_{\bs\eta}^\delta),i=1,2\}_\delta$ is uniformly bounded. Hence the coercivity
of the functional indicates that the sequence $\{x_{\bs\eta}^\delta\}_\delta$ is
uniformly bounded. Thus there exists a subsequence, also denoted by
$\{x_{\bs\eta}^\delta\}_\delta$, and some $x^\ast$, such that
\begin{equation*}
x_{\bs\eta}^\delta\rightarrow x^\ast\quad \mbox{ in $\tau$-topology}.
\end{equation*}
The $\tau$-lower semicontinuity of the functional $\phi$ and Assumption \ref{ass:phi}
yields
\begin{equation*}
0\leq\phi(x^\ast,y^\dagger)\leq
c\liminf_{\delta\rightarrow0}(\phi(x^\dagger,y^\delta)+\phi(x_{\bs\eta}^\delta,y^\delta))\leq
\liminf_{\delta\rightarrow0}c(1+c_m)\delta^2=0.
\end{equation*}
In particular, $\phi(x^\ast,y^\dagger)=0$, i.e., $Kx^\ast=y^\dagger$. This together with
the injectivity $K$ implies $x^\ast=x^\dagger$. Since every subsequence has a
subsubsequence converging to $x^\dagger$, the whole sequence converges to $x^\dagger$.
\end{proof}

\begin{rmk}
The condition $c_0\leq\tfrac{\eta_1(\delta)}{\eta_2(\delta)}\leq c_1$ ensures the uniform
boundedness of both penalties, and thus we can utilize the lower-semicontinuity of the
functionals to arrive at the desired $\tau$-convergence.
\end{rmk}

\begin{thm}\label{thm:morozov2} Let Assumptions \ref{ass:phipsi} and \ref{ass:phi} hold. If
a subsequence $\{\bs\eta(\delta)\}_\delta$ converges and
$\displaystyle\tilde{\eta}\equiv\lim_{\delta\rightarrow0}\tfrac{\eta_1(\delta)}{\eta_2(\delta)}>0$.
Then it contains a subsequence $\tau$-converging to an
$\tilde\eta\psi_1+\psi_2$-minimizing solution and
\begin{equation*}
\lim_{\delta\rightarrow0}\left(\frac{\eta_1(\delta)}{\eta_2(\delta)}\psi_1(x_{\bs\eta}^\delta)+\psi_2(x_{\bs\eta}^\delta)
\right)=\tilde\eta\psi_1(x^\dagger)+\psi_2(x^\dagger).
\end{equation*}
Moreover, if the $\tilde\eta\psi_1+\psi_2$-minimizing solution is unique, then the whole
subsequence $\tau$-converges.
\end{thm}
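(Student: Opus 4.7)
The plan is to mimic the argument of Theorem~\ref{thm:morozov1}, normalizing by $\eta_2(\delta)$ to expose the limiting ratio $\tilde\eta$. First, as in \eqref{ieqn:morozov}, combining the minimality of $x_{\bs\eta}^\delta$ with the discrepancy equation \eqref{eqn:morozov} gives
\[
\bs\eta\cdot\bs\psi(x_{\bs\eta}^\delta)\leq\bs\eta\cdot\bs\psi(x^\dagger),
\]
and dividing through by $\eta_2(\delta)>0$ yields
\[
\tfrac{\eta_1(\delta)}{\eta_2(\delta)}\psi_1(x_{\bs\eta}^\delta)+\psi_2(x_{\bs\eta}^\delta)\leq\tfrac{\eta_1(\delta)}{\eta_2(\delta)}\psi_1(x^\dagger)+\psi_2(x^\dagger).
\]
Since $\eta_1/\eta_2\to\tilde\eta\in(0,\infty)$, the right-hand side is uniformly bounded, hence both $\psi_1(x_{\bs\eta}^\delta)$ and $\psi_2(x_{\bs\eta}^\delta)$ are bounded (one being controlled by the other via the bounded positive ratio).

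Next, Assumption~\ref{ass:phipsi}(i) extracts a further subsequence with $x_{\bs\eta}^\delta\to x^\ast$ in the $\tau$-topology. Exactly as in the proof of Theorem~\ref{thm:morozov1}, Assumption~\ref{ass:phi} applied with $(x,y,x',y')=(x_{\bs\eta}^\delta,y^\dagger,x^\dagger,y^\delta)$, combined with the $\tau$-lower semicontinuity of $\phi(\cdot,y^\dagger)$, forces $\phi(x^\ast,y^\dagger)=0$ and hence $Kx^\ast=y^\dagger$.

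The crucial new step is to identify $x^\ast$ as a minimizer of $\tilde\eta\psi_1+\psi_2$ among all solutions of $Kz=y^\dagger$. Taking liminf on the left using $\tau$-lower semicontinuity of each $\psi_i$, together with the fact that $(\tfrac{\eta_1}{\eta_2}-\tilde\eta)\psi_1(x_{\bs\eta}^\delta)\to 0$ by boundedness of $\psi_1(x_{\bs\eta}^\delta)$, I obtain
\[
\tilde\eta\psi_1(x^\ast)+\psi_2(x^\ast)\le\liminf_{\delta\to 0}\Bigl(\tfrac{\eta_1(\delta)}{\eta_2(\delta)}\psi_1(x_{\bs\eta}^\delta)+\psi_2(x_{\bs\eta}^\delta)\Bigr)\le\tilde\eta\psi_1(x^\dagger)+\psi_2(x^\dagger).
\]
To upgrade minimality against $x^\dagger$ to minimality against an arbitrary competitor $z$ with $Kz=y^\dagger$, I repeat the argument with $z$ in place of $x^\dagger$ inside the minimality inequality for $J_{\bs\eta}$; this requires controlling $\phi(z,y^\delta)$ by a $\delta^2$-scale quantity. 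This is the main obstacle: Assumption~\ref{ass:phi} alone produces only the trivial inequality $\phi(z,y^\dagger)\le c(\phi(x^\dagger,y^\delta)+\phi(z,y^\delta))$ in the wrong direction, so one effectively needs that $\phi(x,y^\delta)$ depends on $x$ only through $Kx$ (giving $\phi(z,y^\delta)=\phi(x^\dagger,y^\delta)=\delta^2$), which is satisfied by the admissible fidelities listed in the introduction. Once this bound is available, the same liminf/limsup sandwich both confirms that $x^\ast$ is an $\tilde\eta\psi_1+\psi_2$-minimizer and pins down the stated limit, its value being $\tilde\eta\psi_1(x^\ast)+\psi_2(x^\ast)=\tilde\eta\psi_1(x^\dagger)+\psi_2(x^\dagger)$ by minimality.

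Finally, the uniqueness claim follows by the standard subsequence-of-subsequence argument: if the $\tilde\eta\psi_1+\psi_2$-minimizer is unique, every subsequence of $\{x_{\bs\eta(\delta)}^\delta\}_\delta$ has a sub-subsequence $\tau$-converging to that unique point, whence the whole subsequence $\tau$-converges.
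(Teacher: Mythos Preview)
Your argument is correct and follows the same route as the paper's proof: normalize \eqref{ieqn:morozov} by $\eta_2(\delta)$, extract a $\tau$-convergent subsequence, show $Kx^\ast=y^\dagger$ via Assumption~\ref{ass:phi}, and run the liminf/limsup sandwich against $x^\dagger$. The paper in fact stops at the comparison with $x^\dagger$ and simply declares $x^\ast$ to be an $\tilde\eta\psi_1+\psi_2$-minimizing solution; your additional step against a general competitor $z$ with $Kz=y^\dagger$ (and the observation that this requires $\phi(z,y^\delta)=\delta^2$, i.e., that $\phi$ depend on $x$ only through $Kx$) is a genuine refinement that the paper leaves implicit.
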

\begin{proof}
By repeating the arguments in Theorem \ref{thm:morozov1}, we deduce that there exists a
subsequence, also denoted by $\{x_{\bs\eta}^\delta\}_\delta$, and some $x^\ast$, such
that
\begin{equation*}
x_{\bs\eta}^\delta\rightarrow x^\ast\quad \mbox{ in $\tau$-topology}.
\end{equation*}
and by the $\tau$-lower-semicontinuity, we have $\phi(x^\ast,y^\dagger)=0$. By virtue of
lower semicontinuity of the functionals and inequality \eqref{ieqn:morozov}, we deduce
\begin{equation*}
\begin{aligned}
\tilde\eta\psi_1(x^\ast)+\psi_2(x^\ast)&\leq\liminf_{\delta\rightarrow0}\left(\frac{\eta_1(\delta)}{\eta_2(\delta)}
\psi_1(x_{\bs\eta}^\delta)+\psi_2(x_{\bs\eta}^\delta)\right)\\
&\leq\limsup_{\delta\rightarrow0}\left(\frac{\eta_1(\delta)}{\eta_2(\delta)}\psi_1(x_{\bs\eta}^\delta)+\psi_2(x_{\bs\eta}^\delta)\right)\\
&\leq\lim_{\delta\rightarrow0}\left(\frac{\eta_1(\delta)}{\eta_2(\delta)}\psi_1(x^\dagger)+\psi_2(x^\dagger)\right)
=\tilde\eta\psi_1(x^\dagger)+\psi_2(x^\dagger),
\end{aligned}
\end{equation*}
which together with the identity $\phi(x^\ast,y^\dagger)=0$ implies that $x^\ast$ is an
$\tilde\eta\psi_1+\psi_2$-minimizing solution. The desired identity follows from the
above inequalities with $x^\ast$ in place of $x^\dagger$. The whole sequence convergence
follows from the standard subsequence argument.
\end{proof}

In Theorems \ref{thm:morozov1} and \ref{thm:morozov2}, we have assumed the existence of a
solution $\bs\eta$ to equation \eqref{eqn:morozov}. This is guaranteed if the Tikhonov
functional $J_{\bs\eta}$ has a unique minimizer, see Theorem \ref{thm:uniquediff}.
\begin{thm}
Assume that $J_{\bs\eta}$ has a unique minimizer for all $\bs\eta>0$,
$\lim_{|\bs\eta|\rightarrow0}\phi(x_{\bs\eta}^\delta,y^\delta)<c_m\delta^2$, and there is
a sequence $\{\bs\eta_n\}$ such that $\lim_{n\rightarrow\infty}
\phi(x_{\bs\eta_n}^\delta, y^\delta)>c_m\delta^2$. Then there exists at least one
solution to \eqref{eqn:morozov}.
\end{thm}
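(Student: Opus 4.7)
The plan is to reduce the existence of a Morozov parameter to a one-dimensional intermediate value theorem along a line segment in the positive cone. The key is that the assumed uniqueness of the minimizer turns the map $\bs\eta\mapsto \phi(x_{\bs\eta}^\delta,y^\delta)$ into a continuous function of $\bs\eta$ that straddles the target value $c_m\delta^2$.

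First I would establish the continuity of $g(\bs\eta):=\phi(x_{\bs\eta}^\delta,y^\delta)$ on the open positive cone $\{\bs\eta>0\}$. By hypothesis, $J_{\bs\eta}$ admits a unique minimizer for every $\bs\eta>0$, so Theorem \ref{thm:uniquediff} yields the existence and continuity of every partial derivative $\partial_iF(\bs\eta)$ throughout this cone. Corollary \ref{cor:DF=psi} then provides the identification
\[
g(\bs\eta)=F(\bs\eta)-\bs\eta\cdot\partial F(\bs\eta).
\]
Since $F$ is concave by Lemma \ref{lem:F}, it is continuous on the interior of its domain, and combined with the continuity of $\partial F$ this makes $g$ continuous on $\{\bs\eta>0\}$.

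Next I would use the two boundary hypotheses to sandwich $c_m\delta^2$. The assumption $\lim_{|\bs\eta|\to 0}\phi(x_{\bs\eta}^\delta,y^\delta)<c_m\delta^2$ supplies some $\bs\eta^{(0)}>0$ with $g(\bs\eta^{(0)})<c_m\delta^2$, while the existence of a sequence $\{\bs\eta_n\}$ with $\lim_{n\to\infty}\phi(x_{\bs\eta_n}^\delta,y^\delta)>c_m\delta^2$ supplies some $\bs\eta^{(1)}=\bs\eta_n$ (for $n$ large) with $g(\bs\eta^{(1)})>c_m\delta^2$. I would then join them by the segment $\gamma(t)=(1-t)\bs\eta^{(0)}+t\bs\eta^{(1)}$, $t\in[0,1]$, which lies entirely in the convex open positive cone. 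The map $t\mapsto g(\gamma(t))$ is continuous on $[0,1]$ with $g(\gamma(0))<c_m\delta^2<g(\gamma(1))$, so the classical intermediate value theorem furnishes $t^\ast\in(0,1)$ with $g(\gamma(t^\ast))=c_m\delta^2$; the point $\bs\eta^\ast:=\gamma(t^\ast)$ solves \eqref{eqn:morozov}.

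The main obstacle is conceptual rather than computational: it lies in recognizing that $g$ must be continuous as a single-valued function of $\bs\eta$. Without the uniqueness hypothesis, $\phi(x_{\bs\eta}^\delta,y^\delta)$ is only sandwiched between $F(\bs\eta)-\bs\eta\cdot\partial^-F(\bs\eta)$ and $F(\bs\eta)-\bs\eta\cdot\partial^+F(\bs\eta)$ by Lemma \ref{lem:DF} and can be multi-valued with jumps, so no IVT argument would go through. The uniqueness of minimizers is precisely what collapses this gap via Theorem \ref{thm:uniquediff} and Corollary \ref{cor:DF=psi}, and once this is in place the remainder of the argument is a textbook IVT along a convex path.
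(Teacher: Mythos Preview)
Your proof is correct and follows essentially the same route as the paper: the paper's two-line argument invokes Theorem~\ref{thm:uniquediff} and Lemma~\ref{lem:F} to obtain continuity of $\bs\eta\mapsto\phi(x_{\bs\eta}^\delta,y^\delta)$ and then says the conclusion ``follows from the continuity directly.'' You have simply made the implicit steps explicit, supplying the identification $g=F-\bs\eta\cdot\partial F$ via Corollary~\ref{cor:DF=psi} and spelling out the intermediate value argument along a segment in the positive cone.
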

\begin{proof}
By Theorem \ref{thm:uniquediff} and Lemma \ref{lem:F}, the uniqueness of a minimizer to
$J_{\bs\eta}$ for all $\bs\eta>0$ implies that the function
$\phi(x_{\bs\eta}^\delta,y^\delta)$ is continuous in $\bs\eta$. The desired assertion
follows from the continuity directly.
\end{proof}

Lastly, we present an error estimate in case of $Y$ being a Hilbert space and
$\phi(x,y^\delta)=\|Kx-y^\delta\|^2$ and convex penalties $\bs\psi$. We use Bregman
distance to measure the error. Denote the subdifferential of a functional $\psi(x)$ at
$x^\dagger$ by $\partial\psi(x^\dagger)$, i.e., $\partial\psi(x^\dagger)=\{\xi\in X^\ast:
\psi(x)\geq\psi(x^\dagger)+\langle \xi,x-x^\dagger\rangle\, \forall x\in X\}$, and the
Bregman distance $d_\xi(x,x^\dagger)$ by for any $\xi\in\partial\psi(x^\dagger)$
\begin{equation*}
d_\xi(x,x^\dagger):=\psi(x)-\psi(x^\dagger)-\langle
\xi,x-x^\dagger\rangle.
\end{equation*}

\begin{thm}\label{thm:morozovest1}
If $Y$ is a Hilbert space and the exact solution $x^\dagger$ satisfies the source
condition: $\mathrm{range}(K^\ast)\cap\partial\psi_1
(x^\dagger)\cap\partial\psi_2(x^\dagger)\neq\emptyset$. Then for any $\bs\eta^\ast$
solving \eqref{eqn:morozov}, there exists some $i$ and $\xi_i\in
\partial\psi_i(x^\dagger)$ such that
\begin{equation*}
d_{\xi_i}(x_{\bs\eta^\ast}^\delta,x^\dagger)\leq C\delta.
\end{equation*}
\end{thm}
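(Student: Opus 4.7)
My plan is to adapt the standard variational source-condition argument from single-parameter Tikhonov regularization, and then extract the correct index $i$ from a pigeonhole argument on the components of $\bs\eta^\ast$. The key enabling fact is that the source condition
\begin{equation*}
\mathrm{range}(K^\ast)\cap\partial\psi_1(x^\dagger)\cap\partial\psi_2(x^\dagger)\neq\emptyset
\end{equation*}
provides a \emph{common} subgradient $\xi=K^\ast w$ that lies in $\partial\psi_i(x^\dagger)$ for both $i=1,2$ simultaneously; this is what makes it possible to treat the two penalties on the same footing.

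First, I would combine the minimizing property of $x_{\bs\eta^\ast}^\delta$ with the usual bound $\phi(x^\dagger,y^\delta)=\|Kx^\dagger-y^\delta\|^2\le\delta^2$ and the discrepancy equation $\|Kx_{\bs\eta^\ast}^\delta-y^\delta\|^2=c_m\delta^2$ to obtain
\begin{equation*}
\bs\eta^\ast\cdot\bigl[\bs\psi(x_{\bs\eta^\ast}^\delta)-\bs\psi(x^\dagger)\bigr]\le(1-c_m)\delta^2\le 0.
\end{equation*}
Second, with $\xi=K^\ast w$ chosen from the source condition, I would decompose each penalty difference as
\begin{equation*}
\psi_i(x_{\bs\eta^\ast}^\delta)-\psi_i(x^\dagger)=d_\xi^{(i)}(x_{\bs\eta^\ast}^\delta,x^\dagger)+\langle\xi,x_{\bs\eta^\ast}^\delta-x^\dagger\rangle,
\end{equation*}
where $d_\xi^{(i)}$ denotes the Bregman distance associated with $\psi_i$ (nonnegative because $\xi\in\partial\psi_i(x^\dagger)$). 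Summing the weighted identity and rearranging gives
\begin{equation*}
\sum_{i=1}^2\eta_i^\ast\,d_\xi^{(i)}(x_{\bs\eta^\ast}^\delta,x^\dagger)\le-\Bigl(\sum_{i=1}^2\eta_i^\ast\Bigr)\langle w,K(x_{\bs\eta^\ast}^\delta-x^\dagger)\rangle,
\end{equation*}
after dropping the nonpositive term $(1-c_m)\delta^2$.

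Third, I would bound the cross term using $\xi=K^\ast w$, Cauchy--Schwarz, the triangle inequality, the discrepancy equation, and $\|y^\delta-y^\dagger\|\le\delta$ to get
\begin{equation*}
|\langle w,K(x_{\bs\eta^\ast}^\delta-x^\dagger)\rangle|\le\|w\|\bigl(\sqrt{c_m}+1\bigr)\delta.
\end{equation*}
Substituting this yields
\begin{equation*}
\sum_{i=1}^2\eta_i^\ast\,d_\xi^{(i)}(x_{\bs\eta^\ast}^\delta,x^\dagger)\le\Bigl(\sum_{i=1}^2\eta_i^\ast\Bigr)\|w\|(1+\sqrt{c_m})\,\delta.
\end{equation*}
Finally, pick the index $i^\ast\in\{1,2\}$ with $\eta_{i^\ast}^\ast=\max(\eta_1^\ast,\eta_2^\ast)$, so that $\eta_{i^\ast}^\ast\ge\tfrac12(\eta_1^\ast+\eta_2^\ast)$. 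Keeping only the $i^\ast$ term on the left and dividing through gives the desired estimate with $\xi_{i^\ast}=\xi$ and $C=2\|w\|(1+\sqrt{c_m})$.

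The only real obstacle is the last pigeonhole step: because the weighted-sum bound degenerates if one $\eta_i^\ast$ is much smaller than the other, we cannot in general control the Bregman distance for every $i$, only for the dominant one. This is exactly why the statement quantifies existentially over $i$ rather than universally, and why the common-subgradient property from the source condition is essential --- without it the two terms would carry different linear parts and could not be amalgamated.
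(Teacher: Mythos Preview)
Your argument is correct, but the paper takes a slightly more direct route. After deriving $\bs\eta^\ast\cdot\bs\psi(x_{\bs\eta^\ast}^\delta)\le\bs\eta^\ast\cdot\bs\psi(x^\dagger)$, the paper applies the pigeonhole \emph{immediately} to the penalty values: since the weighted sum of differences is nonpositive, there is some $i$ with $\psi_i(x_{\bs\eta^\ast}^\delta)\le\psi_i(x^\dagger)$. For that $i$ the Bregman distance reduces to $d_{\xi_i}(x_{\bs\eta^\ast}^\delta,x^\dagger)\le-\langle\xi_i,x_{\bs\eta^\ast}^\delta-x^\dagger\rangle$, and the Cauchy--Schwarz/triangle bound finishes with constant $(1+\sqrt{c_m})\|w\|$. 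By contrast, you keep the weighted Bregman sum intact and only pigeonhole at the end on the \emph{size of the weights} $\eta_i^\ast$, which costs you an extra factor of $2$ in the constant. Both approaches exploit the common subgradient $\xi=K^\ast w$ in the same way; the paper's ordering is a bit sharper and does not require comparing $\eta_1^\ast$ and $\eta_2^\ast$, while yours has the mild advantage that the weighted-sum inequality $\sum_i\eta_i^\ast d_\xi^{(i)}\le(\sum_i\eta_i^\ast)\|w\|(1+\sqrt{c_m})\delta$ is itself a useful intermediate estimate.
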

\begin{proof}
By the minimizing property of $x_{\bs\eta^\ast}^\delta$, we have
\begin{equation*}
\phi(x_{\bs\eta^\ast}^\delta,y^\delta)+\bs\eta^\ast\cdot\bs\psi(x_{\bs\eta^\ast}^\delta)
\leq\phi(x^\dagger,y^\delta)+\bs\eta^\ast\cdot\bs\psi(x^\dagger)\leq\delta^2+\bs\eta^\ast\cdot\bs\psi(x^\dagger).
\end{equation*}
The definition of the discrepancy principle indicates
\begin{equation*}
\bs\eta^\ast\cdot\bs\psi(x_{\bs\eta^\ast}^\delta)\leq
\bs\eta^\ast\cdot\bs\psi(x^\dagger).
\end{equation*}
Consequently, we have that there holds $\psi_i(x_{\bs\eta^\ast}^\delta)
\leq\psi_i(x^\dagger)$ for either $i=1$ or $i=2$. Therefore, by the source condition, for
some $\xi_i\in\mathrm{range}(K^\ast)\cap\partial\psi_i(x^\dagger)$ or equivalently
$\xi_i=K^\ast w_i$ for some source representer $w_i$, and the Cauchy-Schwarz inequality,
we deduce
\begin{equation*}
\begin{aligned}
d_{\xi_i}(x_{\bs\eta^\ast}^\delta,x^\dagger)&=\psi_i(x_{\bs\eta^\ast}^\delta)-\psi_i(x^\dagger)-\langle
\xi_i,x_{\bs\eta^\ast}^\delta-x^\dagger\rangle\leq-\langle\xi_i,x_{\bs\eta^\ast}^\delta-x^\dagger\rangle\\
&=-\langle K^\ast
w_i,x_{\bs\eta^\ast}^\delta-x^\dagger\rangle=-\langle
w_i,K(x_{\bs\eta^\ast}^\delta-x^\dagger)\rangle\\
&\leq\|w_i\|\|K(x_{\bs\eta^\ast}^\delta-x^\dagger)\|\\
&\leq\|w_i\|\left(\|Kx_{\bs\eta^\ast}^\delta-y^\delta\|+\|y^\delta-Kx^\dagger\|\right)\leq
(1+c_m)\|w_i\|\delta.
\end{aligned}
\end{equation*}
This shows the desired estimate.
\end{proof}

The source condition in Theorem \ref{thm:morozovest1} can be hard to argue.
Alternatively, we can have another convergence rates result under a seemingly less
restrictive assumption.
\begin{thm}
If $Y$ is a Hilbert space and the exact solution $x^\dagger$ satisfies the source
condition: for any $t\in[0,1]$, there exists $w_t$ such that $K^\ast w_t=\xi_t\in
\partial(t\psi_1(x^\dagger)+(1-t)\psi_2(x^\dagger))$. Then for any
$\bs\eta^\ast$ solving \eqref{eqn:morozov}, and letting
$t^*=\tfrac{\eta_1^\ast(\delta)}{\eta_1^\ast(\delta)+\eta_2^\ast(\delta)}$, the following
estimate holds
\begin{equation*}
d_{\xi_{t^*}}(x_{\bs\eta^\ast}^\delta,x^\dagger)\leq C\delta.
\end{equation*}
\end{thm}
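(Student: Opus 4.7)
The proof should closely parallel that of Theorem \ref{thm:morozovest1}, with the key twist being that instead of passing to an individual penalty $\psi_i$ via a pigeonhole-style inequality, one exploits the structure of the convex combination directly. The plan is to start from the minimizing property $\phi(x_{\bs\eta^\ast}^\delta,y^\delta)+\bs\eta^\ast\cdot\bs\psi(x_{\bs\eta^\ast}^\delta)\leq\phi(x^\dagger,y^\delta)+\bs\eta^\ast\cdot\bs\psi(x^\dagger)\leq\delta^2+\bs\eta^\ast\cdot\bs\psi(x^\dagger)$, and then use the discrepancy equation $\phi(x_{\bs\eta^\ast}^\delta,y^\delta)=c_m\delta^2\geq\delta^2$ to cancel the data terms and obtain $\bs\eta^\ast\cdot\bs\psi(x_{\bs\eta^\ast}^\delta)\leq\bs\eta^\ast\cdot\bs\psi(x^\dagger)$, exactly as in the previous theorem.

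Next, I would divide this inequality by $\eta_1^\ast+\eta_2^\ast$. With the definition $t^\ast=\eta_1^\ast/(\eta_1^\ast+\eta_2^\ast)$ and $1-t^\ast=\eta_2^\ast/(\eta_1^\ast+\eta_2^\ast)$, this rewrites as
\begin{equation*}
t^\ast\psi_1(x_{\bs\eta^\ast}^\delta)+(1-t^\ast)\psi_2(x_{\bs\eta^\ast}^\delta)\leq t^\ast\psi_1(x^\dagger)+(1-t^\ast)\psi_2(x^\dagger).
\end{equation*}
In other words, for the convex combined functional $\widetilde\psi_{t^\ast}:=t^\ast\psi_1+(1-t^\ast)\psi_2$, we have $\widetilde\psi_{t^\ast}(x_{\bs\eta^\ast}^\delta)\leq\widetilde\psi_{t^\ast}(x^\dagger)$. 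This is the crucial observation that replaces the case-splitting used in Theorem \ref{thm:morozovest1}.

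Now I would invoke the source condition applied at $t=t^\ast\in[0,1]$: there exists $w_{t^\ast}$ with $\xi_{t^\ast}=K^\ast w_{t^\ast}\in\partial\widetilde\psi_{t^\ast}(x^\dagger)$. Using the definition of the Bregman distance of $\widetilde\psi_{t^\ast}$, combined with the inequality above,
\begin{equation*}
d_{\xi_{t^\ast}}(x_{\bs\eta^\ast}^\delta,x^\dagger)=\widetilde\psi_{t^\ast}(x_{\bs\eta^\ast}^\delta)-\widetilde\psi_{t^\ast}(x^\dagger)-\langle\xi_{t^\ast},x_{\bs\eta^\ast}^\delta-x^\dagger\rangle\leq-\langle K^\ast w_{t^\ast},x_{\bs\eta^\ast}^\delta-x^\dagger\rangle.
\end{equation*}
A Cauchy--Schwarz estimate, the triangle inequality $\|K x_{\bs\eta^\ast}^\delta-Kx^\dagger\|\leq\|Kx_{\bs\eta^\ast}^\delta-y^\delta\|+\|y^\delta-Kx^\dagger\|$, and the discrepancy identity $\|Kx_{\bs\eta^\ast}^\delta-y^\delta\|=\sqrt{c_m}\,\delta$ then give an $O(\delta)$ bound, with an absolute constant $(1+\sqrt{c_m})\|w_{t^\ast}\|$.

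The only genuine obstacle is the dependence of the constant $C$ on $\delta$ through $t^\ast=t^\ast(\delta)$. Since $t^\ast$ ranges in the compact interval $[0,1]$ but may vary arbitrarily with $\delta$, to obtain a uniform $C$ one must interpret the source condition as providing a family $\{w_t\}_{t\in[0,1]}$ with $\sup_{t\in[0,1]}\|w_t\|<\infty$; this is the natural strengthening (and arguably implicit in the phrasing of the hypothesis), and under it one takes $C=(1+\sqrt{c_m})\sup_{t\in[0,1]}\|w_t\|$ and the proof concludes. Apart from this uniformity issue, every step is a routine adaptation of the preceding theorem.
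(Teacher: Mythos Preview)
Your proof is correct and follows essentially the same route as the paper's: derive the inequality $t^\ast\psi_1(x_{\bs\eta^\ast}^\delta)+(1-t^\ast)\psi_2(x_{\bs\eta^\ast}^\delta)\leq t^\ast\psi_1(x^\dagger)+(1-t^\ast)\psi_2(x^\dagger)$ from the minimizing property and the discrepancy equation, then apply the source condition at $t=t^\ast$ together with Cauchy--Schwarz and the triangle inequality. Your observation that the resulting constant $(1+\sqrt{c_m})\|w_{t^\ast}\|$ depends on $\delta$ through $t^\ast$, and hence that a uniform bound on $\|w_t\|$ is needed for a $\delta$-independent $C$, is a valid point that the paper does not address explicitly.
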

\begin{proof}
By the minimizing property of $x_{\bs\eta^\ast}^\delta$, we have
\begin{equation*}
t^*\psi_1(x_{\bs\eta^\ast}^\delta) + (1-t^*)\psi_2(x_{\bs\eta^\ast}^\delta)  \leq
t^*\psi_1(x^\dagger)+(1-t^*)\psi_2(x^\dagger).
\end{equation*}
Therefore, by the source condition, for some $\xi_{t^*}\in\partial(t^*\psi(x^\dagger)
+(1-t^*)\psi(x^\dagger))$ and $w_{t^*}\in Y$ such that $\xi_{t^*}=K^\ast w_{t^*}$, and
the Cauchy-Schwarz inequality, we deduce
\begin{equation*}
\begin{aligned}
d_{\xi_{t^*}}(x_{\bs\eta^\ast}^\delta,x^\dagger)&=(t^\ast\psi_1(x_{\bs\eta^\ast}^\delta)+(1-t^\ast)\psi_2(x_{\bs\eta^\ast}^\delta))
-(t^\ast\psi_1(x^\dagger)+(1-t^\ast)\psi_2(x^\dagger))-\langle
\xi_{t^*},x_{\bs\eta^\ast}^\delta-x^\dagger\rangle\\
&\leq-\langle\xi_{t^*},x_{\bs\eta^\ast}^\delta-x^\dagger\rangle=-\langle K^\ast w_{t^*},x_{\bs\eta^\ast}^\delta-x^\dagger\rangle\\
&=-\langle w_{t^*},K(x_{\bs\eta^\ast}^\delta-x^\dagger)\rangle
\leq\|w_{t^*}\|\|K(x_{\bs\eta^\ast}^\delta-x^\dagger)\|\\
&\leq\|w_{t^*}\|\left(\|Kx_{\bs\eta^\ast}^\delta-y^\delta\|+\|y^\delta-Kx^\dagger\|\right)\leq
(1+c_m)\|w_{t^*}\|\delta.
\end{aligned}
\end{equation*}
This shows the desired estimate.
\end{proof}
\begin{rmk}
In the practical applications of the discrepancy principle, one needs to find the
solution of a nonlinear equation in $\bs\eta$. The uniqueness of a solution to equation
\eqref{eqn:morozov} is not guaranteed, and additional conditions need to be supplied for
definiteness. Lastly, we would like to mention that the principle can be efficiently
realized by the model function approach \cite{JinZou:2010}.
\end{rmk}

\subsection{Balancing principle}\label{subsec:bal}

The discrepancy principle described earlier requires an estimate of the noise level
$\delta$, which is not always available in practical applications. Therefore, it is of
great interest to develop heuristic rules that do not require this knowledge. One such
rule is the balancing principle, for which there are several variants, see
\cite{ItoJinZou:2010} for details. The principle can be derived from the augmented
Tikhonov (a-Tikhonov) regularization \cite{JinZou:2009}, which admits clear statistical
interpretations as hierarchical Bayesian modeling. In particular, it provides the
mechanism to automatically balance the penalty with the fidelity, see also Remark
\ref{rmk:bal}. The variant under consideration is due to \cite{Ito:2010a}, and has
demonstrate very promising empirical results for several common single-parameter models
\cite{Ito:2010a}. Finally we remind the balancing principle discussed here should not be
confused with the principle due to Lepskii which is sometimes also named balancing
principle \cite{Mathe:2006} and does require a precise knowledge of the noise level.

First we first sketch the a-Tikhonov regularization approach. For multi-parameter models,
it can be derived analogously from Bayesian inference \cite{JinZou:2009,ItoJinZou:2010},
and the resulting a-Tikhonov functional $J(x,\tau,\{\lambda_i\})$ is given by
\begin{equation*}
    J(x,\tau,\{\lambda_i\})=\tau\phi(x,y^\delta)+ \bs{\lambda}\cdot\bs{\psi}(x)+
    \sum_i(\beta_i\lambda_i-\alpha_i\ln\lambda_i) + \beta_0\tau-\alpha_0 \ln\tau,
\end{equation*}
which maximizes the posteriori probability density function
$p(x,\tau,\{\lambda_i\}|y^\delta)\propto\, p(y^\delta|x,\tau,\{\lambda_i\})\
p(x,\tau,\{\lambda_i\})$ under the assumption that the scalars $\{\lambda_i\}$ and $\tau$
have the Gamma distribution with known parameter pairs $(\alpha_i,\beta_i)$ and
$(\alpha_0,\beta_0)$, respectively. Let $\eta_i=\frac{\lambda_i}{\tau}$. Then the
necessary optimality condition of the a-Tikhonov functional is given by
\begin{equation*}
  \left\{\begin{aligned}
     x^\delta_{\bs\eta}&=\arg\min_{x}\;\left\{\phi(x,y^\delta)+\bs{\eta}\cdot\bs{\psi}(x)\right\},\\
        \lambda_i&=\dfrac{\alpha_i}{\psi_i(x^\delta_{\bs\eta})+\beta_i},\\
        \tau&=\dfrac{\alpha_0}{\phi(x_{\bs\eta}^\delta,y^\delta)+\beta_0}.
  \end{aligned}\right.
\end{equation*}
Upon assuming $\alpha_i=\alpha$ and $\beta_i=\beta$ for simplicity and letting
$\gamma=\frac{\alpha_0}{\alpha}$, then we have the following system for
$(x^\delta_{\bs\eta},{\bs\eta})$
\begin{equation}\label{eqn:atikhopt}
  \left\{\begin{aligned}
    x^\delta_{\bs\eta}&=\arg\min_{x}\;\left\{\phi(x,y^\delta)+{\bs\eta}\cdot{\bs\psi}(x)\right\},\\
    \eta_i&=\frac{1}{\gamma}\,\dfrac{\phi(x^\delta_{\bs\eta},y^\delta)+\beta_0}{\psi_i(x^\delta_{\bs\eta})+\beta}.
  \end{aligned}\right.
\end{equation}

Next, we give the promised balancing principle. The multi-parameter counterpart of the
balancing principle given in \cite{Ito:2010a} consists of minimizing
\begin{equation*}
 \Phi_\gamma({\bs{\eta}})=c_\gamma\frac{F^{2+\gamma}(\bs\eta)}{\eta_1\eta_2},
\end{equation*}
where the constant $c_\gamma=\frac{\gamma^\gamma}{(\gamma+2)^{\gamma+2}}$. We note that
this constant $c_\gamma$ can be quite arbitrary, except for comparison with the criterion
$\Psi_\gamma$ defined next. Another variant of the balancing principle reads
\begin{equation*}
\Psi_\gamma({\bs{\eta}})=\phi(x_{\bs\eta}^\delta,y^\delta)^\gamma\psi_1(x_{\bs\eta}^\delta)\psi_2(x_{\bs\eta}^\delta),
\end{equation*}
which generalizes a criterion due to Reginska \cite{EnglHankeNeubauer:1996}.

The relation between $\Phi_\gamma$ and $\Psi_\gamma$ is made explicit in the following
result.
\begin{prop}
Let the value function $F$ be twice continuously differentiable, $\partial_i F (i=1,2)$
do not vanish, and the Hessian $\nabla^2F$ be nonsingular. Then the criteria
$\Phi_\gamma$ and $\Psi_\gamma$ share the set of critical points, which are the solutions
to the system
\begin{equation}\label{eqn:bal}
 \gamma\eta_1\psi_1(x_{\bs\eta}^\delta)=\gamma\eta_2\psi_2(x_{\bs\eta}^\delta)=\phi(x_{\bs\eta}^\delta,y^\delta).
\end{equation}
\end{prop}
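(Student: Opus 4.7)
The plan is to show that under the stated regularity (namely $F\in C^2$ and invertibility of $\nabla^2 F$), both $\Phi_\gamma$ and $\Psi_\gamma$ can be differentiated cleanly, and the first-order conditions for each reduce to the same two-equation system \eqref{eqn:bal}. The key enabling identity is Corollary \ref{cor:DF=psi}, which gives
\begin{equation*}
\partial_i F(\bs\eta)=\psi_i(x_{\bs\eta}^\delta),\qquad F(\bs\eta)-\bs\eta\cdot\nabla F(\bs\eta)=\phi(x_{\bs\eta}^\delta,y^\delta),
\end{equation*}
so both $\Phi_\gamma$ and $\Psi_\gamma$ are, up to positive factors, smooth functions of $(F,\nabla F)$ alone; in particular $\Psi_\gamma$ depends on $\nabla^2 F$ only through its appearance in $\nabla\Psi_\gamma$.

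First I would treat $\Phi_\gamma$. Since $F>0$ and $\eta_i>0$, I take $\log\Phi_\gamma=\log c_\gamma+(2+\gamma)\log F-\log\eta_1-\log\eta_2$ and compute
\begin{equation*}
\partial_i\log\Phi_\gamma=\frac{(2+\gamma)\partial_iF}{F}-\frac{1}{\eta_i},\qquad i=1,2.
\end{equation*}
Setting these to zero gives $(2+\gamma)\eta_i\partial_iF=F$ for $i=1,2$, i.e.\ $\eta_1\psi_1(x_{\bs\eta}^\delta)=\eta_2\psi_2(x_{\bs\eta}^\delta)=F/(2+\gamma)$. Substituting into $\phi=F-\bs\eta\cdot\nabla F$ yields $\phi=\gamma F/(2+\gamma)$, whence $\gamma\eta_i\psi_i=\phi$ for $i=1,2$. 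Conversely, a direct substitution shows that any solution of \eqref{eqn:bal} satisfies $(2+\gamma)\eta_i\partial_iF=F$, hence is critical for $\Phi_\gamma$.

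Next I handle $\Psi_\gamma$. Writing $\log\Psi_\gamma=\gamma\log\phi+\log\psi_1+\log\psi_2$ and using $\psi_i=\partial_iF$ and $\phi=F-\eta_1\partial_1F-\eta_2\partial_2F$, differentiation in $\eta_j$ (with the symmetry $\partial_i\partial_jF=\partial_j\partial_iF$) gives $\partial_j\phi=-\sum_i\eta_i\,\partial_i\partial_jF$ and $\partial_j\psi_i=\partial_i\partial_jF$, so
\begin{equation*}
\partial_j\log\Psi_\gamma=\sum_{i=1}^{2}\partial_i\partial_jF\left(\frac{1}{\psi_i}-\frac{\gamma\eta_i}{\phi}\right),\qquad j=1,2.
\end{equation*}
This is a linear system in the two quantities $v_i:=\tfrac{1}{\psi_i}-\tfrac{\gamma\eta_i}{\phi}$ with coefficient matrix $\nabla^2 F$. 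Because $\nabla^2 F$ is assumed nonsingular, the only solution is $v_1=v_2=0$, which is exactly $\gamma\eta_i\psi_i=\phi$ for $i=1,2$. Conversely, \eqref{eqn:bal} forces each $v_i$ to vanish and hence $\nabla\log\Psi_\gamma=0$.

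The main subtlety is the $\Psi_\gamma$ step: naive differentiation produces a system that mixes the Hessian of $F$ with the desired conditions, and without the non-degeneracy of $\nabla^2 F$ one could only conclude that a certain linear combination of $v_1,v_2$ vanishes rather than each individually. The assumption that $\nabla^2 F$ is nonsingular, together with $\partial_iF\neq 0$ (which ensures $\psi_i>0$ so that the $1/\psi_i$ terms are well-defined), is precisely what lets us decouple the two equations and identify the critical-point set with \eqref{eqn:bal}.
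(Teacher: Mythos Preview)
Your argument is correct and follows essentially the same route as the paper. Both proofs use Corollary~\ref{cor:DF=psi} to write $\psi_i=\partial_iF$ and $\phi=F-\bs\eta\cdot\nabla F$, set the gradient of each criterion to zero, and for $\Psi_\gamma$ invoke the nonsingularity of $\nabla^2F$ (together with $\psi_i\neq0$) to decouple the resulting linear system into \eqref{eqn:bal}. The only differences are cosmetic: you differentiate $\log\Phi_\gamma$ and $\log\Psi_\gamma$ rather than the criteria themselves, and you spell out the (trivial) converse implications explicitly, which the paper leaves implicit.
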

\begin{proof}
Setting the first-order derivatives of the criterion $\Phi_\gamma$ to zero gives
\begin{equation*}
\nabla\Phi_\gamma(\eta)=c_\gamma\frac{F^{1+\gamma}(\bs\eta)}{\eta_1\eta_2}\left[\begin{array}{c}
(2+\gamma)\psi_1(x_{\bs\eta}^\delta)-\tfrac{F}{\eta_1}\\[1ex]
(2+\gamma)\psi_2(x_{\bs\eta}^\delta)-\tfrac{F}{\eta_2}
\end{array}\right]=\bs{0}.
\end{equation*}
This together with Lemma \ref{lem:DF} gives
$(2+\gamma)\eta_i\psi_i(x_{\bs\eta}^\delta)=F,\,i=1,2$. Consequently,
$\eta_1\psi_1(x_{\bs\eta}^\delta)=\eta_2\psi_2(x_{\bs\eta}^\delta)$, and thus system
\eqref{eqn:bal} holds. Meanwhile, by setting the first-order derivatives
$\nabla\Psi_\gamma(\bs\eta)$ of the the criterion $\Psi_\gamma$ to zero and noting Lemma
\ref{lem:DF}, we get
\begin{equation*}
\phi(x_{\bs\eta}^\delta,y^\delta)^{\gamma-1}\nabla^2F\left[\begin{array}{cc}
\psi_2(x_{\bs\eta}^\delta)&\\ &\psi_1(x_{\bs\eta}^\delta)
\end{array}\right]\left[\begin{array}{c}
-\gamma\eta_1\psi_1(x_{\bs\eta}^\delta)+\phi(x_{\bs\eta}^\delta,y^\delta)\\[1ex]
-\gamma\eta_2\psi_2(x_{\bs\eta}^\delta)+\phi(x_{\bs\eta}^\delta,y^\delta)
\end{array} \right]=\bs{0}.
\end{equation*}
By the assumption that the Hessian $\nabla^2F$ is nonsingular and
$\psi_i(x_{\bs\eta}^\delta)(i=1,2)$ do not vanish, we arrive at
\begin{equation*}
\begin{aligned}
\gamma\eta_i\psi_i(x_{\bs\eta}^\delta)-\phi(x_{\bs\eta}^\delta,y^\delta)=0,
\end{aligned}
\end{equation*}
i.e., system \eqref{eqn:bal}. This concludes the proof.
\end{proof}

\begin{rmk}
Criterion $\Phi_\gamma$ makes only use of the value function $F(\eta)$, not of the
derivatives of $F(\bs\eta)$, which can be potentially multi-valued in case that the
functional $J_{\bs\eta}$ has multiple minimizers. In contrast, the value function
$F(\bs\eta)$ is always continuous, see Lemma \ref{lem:F}, and thus the optimization
problem of minimizing $\Phi_\gamma$ over any bounded regions is always well-defined.  For
models with potentially nonunique minimizers, criterion $\Psi_\gamma$ and balancing
principle, i.e., equation \eqref{eqn:bal}, are ill-defined, and the corresponding
minimization formulations can be problematic. The criterion $\Phi_\gamma$ is advantageous
then.
\end{rmk}

\begin{rmk}\label{rmk:bal}
Balancing principle is named after system \eqref{eqn:bal}: it attempts to balance the
fidelity with the penalties with the parameter $\gamma$ being the relative weight.
Comparing \eqref{eqn:bal} with \eqref{eqn:atikhopt} shows clearly the intimate
connections between the a-Tikhonov approach and the balancing principle: the a-Tikhonov
approach builds in the principle automatically, and consequently the hierarchical
Bayesian modeling is also balancing. Finally, we would like to remark that the balancing
idea has been developed from other perspectives, see \cite[Section 2.2]{ItoJinZou:2010}
for details.
\end{rmk}

The relation between the criteria $\Phi_\gamma$ and $\Psi_\gamma$ is made more precise in
the following theorem: $\Psi_\gamma$ always lies below $\Phi_\gamma$, and thus at each
local minimum, $\Phi_\gamma$ is sharper and numerically easier to locate.
\begin{thm}
For any $\gamma>0$, the following inequality holds
\begin{equation*}
\Psi_\gamma(\bs\eta)\leq \Phi_\gamma(\bs\eta),\quad \forall
\bs\eta>0.
\end{equation*}
The equality is achieved if and only if the balancing equation
\eqref{eqn:bal} is verified.
\end{thm}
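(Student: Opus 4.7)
The plan is to recognize the inequality as a weighted AM--GM estimate applied to the three non-negative quantities that sum to the value function. Concretely, by Lemma \ref{lem:DF} (or rather directly from $x_{\bs\eta}^\delta\in\mathcal{M}_{\bs\eta}$), we have
\begin{equation*}
F(\bs\eta)=\phi(x_{\bs\eta}^\delta,y^\delta)+\eta_1\psi_1(x_{\bs\eta}^\delta)+\eta_2\psi_2(x_{\bs\eta}^\delta).
\end{equation*}
Abbreviating $a=\phi(x_{\bs\eta}^\delta,y^\delta)$, $b=\eta_1\psi_1(x_{\bs\eta}^\delta)$ and $c=\eta_2\psi_2(x_{\bs\eta}^\delta)$, all three are non-negative and $F=a+b+c$. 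Substituting $\eta_1=b/\psi_1(x_{\bs\eta}^\delta)$ and $\eta_2=c/\psi_2(x_{\bs\eta}^\delta)$ into $\Phi_\gamma$ yields
\begin{equation*}
\Phi_\gamma(\bs\eta)=c_\gamma\,\frac{(a+b+c)^{\gamma+2}\,\psi_1(x_{\bs\eta}^\delta)\,\psi_2(x_{\bs\eta}^\delta)}{bc},
\qquad \Psi_\gamma(\bs\eta)=a^\gamma\,\psi_1(x_{\bs\eta}^\delta)\,\psi_2(x_{\bs\eta}^\delta).
\end{equation*}
If either $\psi_i(x_{\bs\eta}^\delta)=0$ then $\Psi_\gamma=0\le\Phi_\gamma$ trivially, and likewise if $b$ or $c$ vanishes the right-hand side is $+\infty$; so I may assume all of $\psi_1,\psi_2,b,c>0$. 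Cancelling the positive factor $\psi_1(x_{\bs\eta}^\delta)\psi_2(x_{\bs\eta}^\delta)$ reduces the claim to the purely algebraic inequality
\begin{equation*}
a^\gamma\,b\,c\le c_\gamma\,(a+b+c)^{\gamma+2}, \qquad c_\gamma=\frac{\gamma^\gamma}{(\gamma+2)^{\gamma+2}}.
\end{equation*}

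I would then establish this inequality by weighted AM--GM with weights $\tfrac{\gamma}{\gamma+2},\tfrac{1}{\gamma+2},\tfrac{1}{\gamma+2}$ applied to $a/\gamma$, $b$, $c$: namely,
\begin{equation*}
\Bigl(\tfrac{a}{\gamma}\Bigr)^{\!\gamma/(\gamma+2)}b^{\,1/(\gamma+2)}c^{\,1/(\gamma+2)}
\le \tfrac{\gamma}{\gamma+2}\cdot\tfrac{a}{\gamma}+\tfrac{1}{\gamma+2}\,b+\tfrac{1}{\gamma+2}\,c
=\tfrac{a+b+c}{\gamma+2}.
\end{equation*}
Raising to the power $\gamma+2$ and multiplying by $\gamma^\gamma$ gives exactly $a^\gamma bc\le c_\gamma(a+b+c)^{\gamma+2}$. (Equivalently, one can maximise $a^\gamma bc$ on the simplex $a+b+c=\mathrm{const}$ by Lagrange multipliers; the unique interior critical point is $a=\gamma b=\gamma c$, i.e.\ $b=c=s/(\gamma+2)$ and $a=\gamma s/(\gamma+2)$, at which the value is exactly $c_\gamma s^{\gamma+2}$.)

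The equality case of weighted AM--GM is $a/\gamma=b=c$, which translates back to
\begin{equation*}
\phi(x_{\bs\eta}^\delta,y^\delta)=\gamma\eta_1\psi_1(x_{\bs\eta}^\delta)=\gamma\eta_2\psi_2(x_{\bs\eta}^\delta),
\end{equation*}
i.e.\ precisely the balancing system \eqref{eqn:bal}. There is no real obstacle here; the only thing to be careful about is the dichotomy between the generic case (all of $a,b,c,\psi_i$ strictly positive) where the AM--GM applies verbatim, and the degenerate cases where either side of the inequality collapses, which must be checked separately but are immediate. The conceptual content is simply that $c_\gamma$ was chosen as the optimal constant for this precise AM--GM comparison.
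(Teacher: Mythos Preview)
Your proof is correct and follows essentially the same route as the paper: the paper invokes the generalized Young inequality $abc\le \frac{a^p}{p}+\frac{b^q}{q}+\frac{c^r}{r}$ with $p=\frac{2+\gamma}{\gamma}$, $q=r=2+\gamma$, which is exactly the weighted AM--GM you use after the change of variables. Your reduction to the scalar inequality $a^\gamma bc\le c_\gamma(a+b+c)^{\gamma+2}$ is arguably more transparent than the paper's choice of auxiliary variables, but the content is identical; one small quibble is that the phrase ``the right-hand side is $+\infty$'' is misleading, since $\Phi_\gamma=c_\gamma F^{\gamma+2}/(\eta_1\eta_2)$ stays finite when $\psi_i=0$---the point is simply that $\Psi_\gamma=0\le\Phi_\gamma$ there, which you already say.
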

\begin{proof}
Recall that for any $a,b,c\geq0$ and $p,q,r>1$ with $\frac{1}{p}+\frac{1}{q}+
\frac{1}{r}=1$, there holds the generalized Young's inequality $abc\leq
\frac{a^p}{p}+\frac{b^q}{q}+\frac{c^r}{r}$, with equality holds if and only if
$a^p=b^q=c^r$. Let $p=\frac{2+\gamma}{\gamma}$ and $q=r=2+\gamma$. Applying the
inequality with $a=\phi^{\frac{\gamma}{2+\gamma}}
(\eta_1\eta_2)^{-\frac{\gamma}{2(2+\gamma)}}$, $b=
(\gamma\psi_1)^\frac{1}{2+\gamma}(\eta_1\eta_2^{-1})^\frac{1}{2(2+\gamma)}$ and
$c=(\gamma\psi_2)^\frac{1}{2+\gamma} (\eta_1^{-1}\eta_2)^\frac{1}{2(2+\gamma)}$ gives
\begin{equation*}
\begin{aligned}
\phi^{\frac{\gamma}{2+\gamma}}\psi_1^{\frac{1}{2+\gamma}}\psi_2^{\frac{1}{2+\gamma}}
(\eta_1\eta_2)^{-\frac{\gamma}{2(2+\gamma)}}\gamma^\frac{2}{2+\gamma}
&\leq\frac{\gamma}{2+\gamma}\frac{\phi+\bs\eta\cdot\bs\psi}{(\eta_1\eta_2)^\frac{1}{2}}
=\frac{\gamma}{2+\gamma}\frac{F({\bs{\eta}})}{(\eta_1\eta_2)^\frac{1}{2}}.
\end{aligned}
\end{equation*}
Hence
\begin{equation*}
\phi^\frac{\gamma}{2+\gamma}\psi_1^\frac{1}{2+\gamma}\psi_2^\frac{1}{2+\gamma}\leq
\frac{\gamma^\frac{\gamma}{\gamma+2}}{2+\gamma}\frac{F(\bs\eta)}{(\eta_1\eta_2)^\frac{1}{2+\gamma}}.
\end{equation*}
Therefore, we have
\begin{equation*}
\Psi_\gamma(\bs\eta)\leq
\frac{\gamma^\gamma}{(2+\gamma)^{2+\gamma}}\frac{F^{2+\gamma}(\bs\eta)}{\eta_1\eta_2}=\Phi_\gamma(\bs\eta).
\end{equation*}
The equality holds if and only if $a^p=b^q=c^r$, i.e.,
\begin{equation*}
[\phi^{\frac{\gamma}{2+\gamma}}(\eta_1\eta_2)^{-\frac{\gamma}{
2(2+\gamma)}}]^\frac{2+\gamma}{\gamma}=[(\gamma\psi_1)^\frac{1}{2+\gamma}(\eta_1\eta_2^{-1})^\frac{1}{2(2+\gamma)}]^{2+\gamma}
=[(\gamma\psi_2)^\frac{1}{2+\gamma}(\eta_1^{-1}\eta_2)^\frac{1}{2(2+\gamma)}]^{2+\gamma}.
\end{equation*}
Simplifying this gives the balancing equation \eqref{eqn:bal}. This
concludes the proof.
\end{proof}

The following result shows an interesting property of a minimizer to Criterion
$\Phi_\gamma$.
\begin{thm}
At a local minimizer $\bs\eta^\ast$ to the function $\Phi_\gamma$, the partial
derivatives of $F(\bs\eta)$ exist.
\end{thm}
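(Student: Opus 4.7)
The plan is to exploit the first-order optimality condition at the local minimizer $\bs\eta^*$ one coordinate at a time, using only the one-sided derivatives of $F$ (which are guaranteed to exist by the earlier lemma on the one-sided partial derivatives of the concave value function).

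First, I would fix an index $i\in\{1,2\}$ and restrict $\Phi_\gamma$ to the ray $h\mapsto\Phi_\gamma(\bs\eta^*+h\bs e_i)$. Since the map $h\mapsto F(\bs\eta^*+h\bs e_i)$ has a left- and right-derivative at $h=0$ equal to $\partial_i^{-}F(\bs\eta^*)$ and $\partial_i^{+}F(\bs\eta^*)$ respectively, and since $F(\bs\eta^*)>0$ (otherwise, by monotonicity, $F\equiv 0$ in a neighborhood of $\bs\eta^*$, a degenerate case which can be handled separately), the chain rule for one-sided derivatives applied to $\Phi_\gamma=c_\gamma F^{2+\gamma}/(\eta_1\eta_2)$ gives
\begin{equation*}
\frac{d}{dh}\Phi_\gamma(\bs\eta^*+h\bs e_i)\Big|_{h=0^\pm}
=\frac{c_\gamma F^{1+\gamma}(\bs\eta^*)}{\eta_1^*\eta_2^*}\left[(2+\gamma)\,\partial_i^{\pm}F(\bs\eta^*)-\frac{F(\bs\eta^*)}{\eta_i^*}\right].
\end{equation*}

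Next, because $\bs\eta^*$ is a local minimizer of $\Phi_\gamma$, the right-derivative above must be nonnegative and the left-derivative must be nonpositive. Cancelling the strictly positive prefactor $c_\gamma F^{1+\gamma}(\bs\eta^*)/(\eta_1^*\eta_2^*)$ yields
\begin{equation*}
(2+\gamma)\,\partial_i^{+}F(\bs\eta^*)\ \ge\ \frac{F(\bs\eta^*)}{\eta_i^*}\ \ge\ (2+\gamma)\,\partial_i^{-}F(\bs\eta^*).
\end{equation*}
Combined with the general inequality $\partial_i^{+}F(\bs\eta^*)\le \partial_i^{-}F(\bs\eta^*)$ from the earlier lemma, both pinched inequalities must be equalities, so $\partial_i^{+}F(\bs\eta^*)=\partial_i^{-}F(\bs\eta^*)=F(\bs\eta^*)/[(2+\gamma)\eta_i^*]$. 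This establishes the existence of $\partial_iF(\bs\eta^*)$ for each $i$.

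The only conceptual step that requires care is the chain-rule computation in the first paragraph: since $F$ may only be one-sidedly differentiable, I have to invoke it for one-sided derivatives of a composition of a smooth outer function with a monotone one-sidedly differentiable inner function, which is routine but deserves to be written out explicitly. Once that is in place, the rest of the argument is a direct application of the sign constraint on one-sided derivatives at a local minimum, together with the monotonicity inequality $\partial_i^{+}F\le\partial_i^{-}F$. I would also briefly note that $F(\bs\eta^*)=0$ is incompatible with $\bs\eta^*$ being a local minimizer in the non-degenerate setting, since in that case $\Phi_\gamma$ would vanish at $\bs\eta^*$ and the claim about partial derivatives is either vacuous or follows trivially from the concavity bounds.
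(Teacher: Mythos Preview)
Your argument is correct and follows essentially the same route as the paper: both proofs compute the one-sided $i$-th derivatives of $\Phi_\gamma$ via the chain rule, use the local-minimum sign conditions $\partial_i^{-}\Phi_\gamma(\bs\eta^*)\le 0\le\partial_i^{+}\Phi_\gamma(\bs\eta^*)$, and combine with the concavity inequality $\partial_i^{+}F\le\partial_i^{-}F$ to force equality. The only cosmetic differences are that the paper phrases it as a proof by contradiction (and subtracts the two one-sided derivatives rather than pinching $F(\bs\eta^*)/\eta_i^*$ between them), whereas you argue directly and in passing obtain the explicit value $\partial_iF(\bs\eta^*)=F(\bs\eta^*)/[(2+\gamma)\eta_i^*]$.
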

\begin{proof}
Assume that the assertion is not true, i.e., $\bs\eta^\ast$ is a discontinuity point of
at least one $\psi_i$. Since $\bs\eta^\ast$ is a local minimizer, we have
\begin{equation*}
\partial_i^-\Phi_\gamma(\bs\eta^\ast)\leq 0\quad \mbox{and}\quad
\partial_i^+\Phi_\gamma(\bs\eta^\ast)\geq0.
\end{equation*}
In particular, this implies that
$\partial_i^+\Phi_\gamma(\bs\eta^\ast)-\partial_i^-\Phi_\gamma(\bs\eta^\ast)\geq0$. Note
that
\begin{equation*}
\partial_i^+\Phi_\gamma(\bs\eta^\ast)-\partial_i^-\Phi_\gamma(\bs\eta^\ast)
=(2+\gamma)c_\gamma\frac{1}{\eta^\ast_1\eta^\ast_2}F^\gamma(\bs\eta^\ast)
\left[\partial_i^+F(\bs\eta^\ast)-\partial_i^-F(\bs\eta^\ast)\right]
\end{equation*}
and consequently $\partial_i^+F(\bs\eta^\ast)-\partial_i^-F(\bs\eta^\ast)\geq0$. This is
in contradiction with the fact that at a discontinuity point $\bs\eta^\ast$,
$\partial_i^+F(\bs\eta^\ast)- \partial_i^-F(\bs\eta^\ast)<0$ by the monotonicity of the
function $\psi_i(x_{\bs\eta}^\delta)$ with respect to $\eta_i$.
\end{proof}

Now we present an a posteriori error estimate for Criterion $\Phi_\gamma$ when $Y$ is a
Hilbert space and $\phi(x,y^\delta)=\|Kx-y^\delta\|^2$ and convex penalties. The proof
will be presented elsewhere, and we also refer to \cite{Ito:2010a}. Theorem
\ref{thm:errest} provides one a posteriori way to check the automatically determined
(vector-valued) regularization parameter, and partially justifies the criterion
theoretically.

\begin{thm}\label{thm:errest}
Let the following source condition be satisfied for the exact solution $x^\dagger$: for
any $t\in[0,1]$ there exists a $w_t\in Y$
\begin{equation*}
 \xi_t\in\partial\left(t\psi_1(x^\dagger)+(1-t)\psi_2(x^\dagger)\right)\quad \mbox{and}\quad
 \xi_t=K^\ast w_t.
\end{equation*}
Then for every $\bs\eta^\ast$ determined by the criterion $\Phi_\gamma$, there exists
some constant $C$ such that
\begin{equation*}
  d_{\xi_{t^\ast}}(x_{\bs\eta^\ast}^\delta,x^\dagger)\leq C\left(\|w_{t^\ast}\|+\frac{F^{1+\frac{\gamma}{2}}
  (\delta\bs e)}{F^{1+\frac{\gamma}{2}}(\bs\eta^\ast)}\right)\max(\delta,\delta_\ast),
\end{equation*}
where $\bs e=(1,1)^\mathrm{T}$, $\delta_\ast=\|Kx_{\bs\eta^\ast}^\delta-y^\delta\|$, and
$t^\ast=\eta_1^\ast/(\eta_1^\ast+\eta_2^\ast)$.
\end{thm}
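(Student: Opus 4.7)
The plan is to combine three ingredients: the Tikhonov minimality of $x_{\bs\eta^\ast}^\delta$ against the competitor $x^\dagger$, the $\Phi_\gamma$-minimality of $\bs\eta^\ast$ against the test point $\delta\bs e$, and the source condition at $t=t^\ast$. The key algebraic observation is that, writing $s^\ast=\eta_1^\ast+\eta_2^\ast$ and noting $t^\ast=\eta_1^\ast/s^\ast$, one has $\bs\eta^\ast\cdot\bs\psi(x)=s^\ast\bigl[t^\ast\psi_1(x)+(1-t^\ast)\psi_2(x)\bigr]$, so the exact convex combination entering the source condition is already visible inside $J_{\bs\eta^\ast}$ up to the scalar factor $s^\ast$. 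This singles out $d_{\xi_{t^\ast}}$ as the natural quantity to estimate and converts the proof into two separate tasks.

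First, I would use Tikhonov minimality with $x^\dagger$ as competitor, $\phi(x_{\bs\eta^\ast}^\delta,y^\delta)+\bs\eta^\ast\cdot\bs\psi(x_{\bs\eta^\ast}^\delta)\le\delta^2+\bs\eta^\ast\cdot\bs\psi(x^\dagger)$, and the factorization above to obtain
\begin{equation*}
\bigl[t^\ast\psi_1(x_{\bs\eta^\ast}^\delta)+(1-t^\ast)\psi_2(x_{\bs\eta^\ast}^\delta)\bigr]-\bigl[t^\ast\psi_1(x^\dagger)+(1-t^\ast)\psi_2(x^\dagger)\bigr]\le\frac{\delta^2-\delta_\ast^2}{s^\ast}.
\end{equation*}
Inserting this into the definition of $d_{\xi_{t^\ast}}$, replacing $\xi_{t^\ast}=K^\ast w_{t^\ast}$ by the source condition, and bounding $|\langle w_{t^\ast},K(x_{\bs\eta^\ast}^\delta-x^\dagger)\rangle|\le\|w_{t^\ast}\|(\delta_\ast+\delta)$ through Cauchy--Schwarz together with the triangle inequality yields
\begin{equation*}
d_{\xi_{t^\ast}}(x_{\bs\eta^\ast}^\delta,x^\dagger)\le\frac{\delta^2}{s^\ast}+\|w_{t^\ast}\|(\delta_\ast+\delta),
\end{equation*}
where the $-\delta_\ast^2/s^\ast$ term has been discarded (it is either already negative or serves only to tighten the bound).

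Second, I would translate the $\Phi_\gamma$-minimality of $\bs\eta^\ast$ into a lower bound on $s^\ast$. Comparing $\Phi_\gamma(\bs\eta^\ast)\le\Phi_\gamma(\delta\bs e)$ gives $F^{2+\gamma}(\bs\eta^\ast)/(\eta_1^\ast\eta_2^\ast)\le F^{2+\gamma}(\delta\bs e)/\delta^2$, i.e.\ $\eta_1^\ast\eta_2^\ast\ge\delta^2\bigl(F(\bs\eta^\ast)/F(\delta\bs e)\bigr)^{2+\gamma}$, and AM--GM then yields $s^\ast\ge 2\sqrt{\eta_1^\ast\eta_2^\ast}\ge 2\delta\,F^{1+\gamma/2}(\bs\eta^\ast)/F^{1+\gamma/2}(\delta\bs e)$. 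Consequently
\begin{equation*}
\frac{\delta^2}{s^\ast}\le\frac{\delta}{2}\cdot\frac{F^{1+\gamma/2}(\delta\bs e)}{F^{1+\gamma/2}(\bs\eta^\ast)}\le\frac{1}{2}\cdot\frac{F^{1+\gamma/2}(\delta\bs e)}{F^{1+\gamma/2}(\bs\eta^\ast)}\max(\delta,\delta_\ast),
\end{equation*}
and using $\|w_{t^\ast}\|(\delta_\ast+\delta)\le 2\|w_{t^\ast}\|\max(\delta,\delta_\ast)$ the two bounds combine to the claimed estimate with $C=2$.

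I expect the main delicate point to be the status of the comparison $\Phi_\gamma(\bs\eta^\ast)\le\Phi_\gamma(\delta\bs e)$: this is only immediate if $\bs\eta^\ast$ is interpreted as a global minimizer over a region containing $\delta\bs e$, and one also needs $\Phi_\gamma(\delta\bs e)$ to be well-defined, which is ensured by the continuity and monotonicity of $F$ in Lemma \ref{lem:F} together with the asymptotics in Proposition \ref{prop:asymp}. Everything else is routine algebra once the $s^\ast$-factorization has identified the right Bregman distance to estimate.
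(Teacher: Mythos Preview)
The paper does not actually prove this theorem: immediately before the statement it says ``The proof will be presented elsewhere, and we also refer to \cite{Ito:2010a}.'' Hence there is no in-paper argument to compare against.

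That said, your proposal is sound and almost certainly the intended one. The factorization $\bs\eta^\ast\cdot\bs\psi(x)=s^\ast[t^\ast\psi_1(x)+(1-t^\ast)\psi_2(x)]$ is exactly what the paper uses in Theorem~3.5 for the discrepancy principle, and the only new ingredient here is to control $\delta^2/s^\ast$ via the $\Phi_\gamma$-optimality of $\bs\eta^\ast$; the referenced single-parameter paper \cite{Ito:2010a} handles the analogous step in the same way, by comparing $\Phi_\gamma$ at the chosen parameter against its value at a $\delta$-scale point. Your chain $\Phi_\gamma(\bs\eta^\ast)\le\Phi_\gamma(\delta\bs e)\Rightarrow \eta_1^\ast\eta_2^\ast\ge\delta^2(F(\bs\eta^\ast)/F(\delta\bs e))^{2+\gamma}\Rightarrow s^\ast\ge 2\delta\,F^{1+\gamma/2}(\bs\eta^\ast)/F^{1+\gamma/2}(\delta\bs e)$ via AM--GM is clean and gives the stated form of the bound with $C=2$.

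Your caveat is the right one and worth stating explicitly in the write-up: the inequality $\Phi_\gamma(\bs\eta^\ast)\le\Phi_\gamma(\delta\bs e)$ requires $\bs\eta^\ast$ to be a \emph{global} minimizer of $\Phi_\gamma$ (or at least a minimizer over a region containing $\delta\bs e$), whereas the paper elsewhere speaks of local minimizers. One also needs $F(\bs\eta^\ast)>0$ to make the ratio meaningful; this is automatic provided $\phi(x_{\bs\eta^\ast}^\delta,y^\delta)+\bs\eta^\ast\cdot\bs\psi(x_{\bs\eta^\ast}^\delta)>0$, which holds unless the data are fit exactly by an element annihilating both penalties. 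Beyond these two points there is nothing delicate in your argument.
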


Finally, we present two algorithms, see Algorithms \ref{alg:fpai} and \ref{alg:fpaii},
for computing a minimizer of Criterion $\Phi_\gamma$. The algorithms are of fixed point
type, and can be regarded as natural extensions of the fixed point algorithm in
\cite{Ito:2010a}. Practically, the algorithms merit a very steady and fast convergence.

\begin{algorithm}[t]
	\caption{Fixed point algorithm I.\label{alg:fpai}}
\begin{algorithmic}[1]
	\State Choose $\gamma$, $\bs\eta^0$ and set $k=0$.
	\Repeat
	\State Solve for $x^{k+1}$ by the Tikhonov regularization method
        \begin{equation*}
            x^{k+1}=\arg\min_{x}\left\{\phi(x,y^\delta)+\bs\eta^k\cdot\bs\psi(x)\right\}.
        \end{equation*}
	\State Update the regularization parameter $\bs\eta^{k+1}$ by
        \begin{equation*}
           \begin{aligned}
               \eta_1^{k+1}&=\frac{1}{1+\gamma}\frac{\phi(x^{k+1},y^\delta)+\eta_2^k\psi_2(x^{k+1})}{\psi_1(x^{k+1})},\\
               \eta_2^{k+1}&=\frac{1}{1+\gamma}\frac{\phi(x^{k+1},y^\delta)+\eta_1^k\psi_1(x^{k+1})}{\psi_2(x^{k+1})}.
           \end{aligned}
        \end{equation*}
	\Until A stopping criterion is satisfied.
\end{algorithmic}
\end{algorithm}

\begin{algorithm}[t]
	\caption{Fixed point algorithm II.\label{alg:fpaii}}
\begin{algorithmic}[1]
	\State Choose $\gamma$, $\bs\eta^0$ and set $k=0$.
	\Repeat
	\State Solve for $x^{k+1}$ by the Tikhonov regularization method
        \begin{equation*}
            x^{k+1}=\arg\min_{x}\left\{\phi(x,y^\delta)+\bs\eta^k\cdot\bs\psi(x)\right\}.
        \end{equation*}
	\State Update the regularization parameter $\bs\eta^{k+1}$ by
        \begin{equation*}
           \begin{aligned}
               \eta_1^{k+1}&=\frac{1}{\gamma}\frac{\phi(x^{k+1},y^\delta)}{\psi_1(x^{k+1})},\\
               \eta_2^{k+1}&=\frac{1}{\gamma}\frac{\phi(x^{k+1},y^\delta)}{\psi_2(x^{k+1})}.
           \end{aligned}
        \end{equation*}
	\Until A stopping criterion is satisfied.
\end{algorithmic}
\end{algorithm}

\section{Numerical experiments}\label{sec:exp}

This part presents numerical results for three examples, which are integral equations of
the first kind with kernel $k(s,t)$ and solution $x^\dagger(t)$, to illustrate features
of multi-parameter models. The discretized linear system takes the form
$\mathbf{Kx}^\dagger=\mathbf{y}^\dagger$. The data $\mathbf{y}^\dagger$ is corrupted by
noises, i.e., $y_i^\delta=y_i^\dagger+\max_{i}\{|y_i^\dagger|\}\varepsilon \xi_i$, where
$\xi_i$ are standard Gaussian variables and $\varepsilon$ refers to the relative noise
level. The fidelity $\phi$ is taken to be the standard least-squares fitting. We present
only the numerical results for Algorithm II, as Algorithm I exhibits similar convergence
behavior. The initial guess is always taken to be $1\times10^{-3}$, and it is stopped if
the relative change of $\bs\eta$ is smaller than $1.0\times10^{-3}$. The parameter
$\gamma$ in Criterion $\Phi_\gamma$ is determined by a two-step procedure
\cite{Ito:2010a}: The initial guess for $\gamma$ is set to $5$, and then it is
automatically adjusted according to the estimate noise level.

\subsection{$H^1$-$TV$ model}
\begin{exam}\label{exam:h1tv}
Let $\zeta(t)=\chi_{|t|\leq3}(1+\cos\frac{\pi t}{3})$, and the kernel $k$ is given by
$k(s,t)=\zeta(s-t)$. The exact solution $x^\dagger$ is shown in Fig. \ref{fig:h1tv}, and
the integration interval is $[-6,6]$. The solution $x^\dagger$ exhibits both flat and
smoothly varying regions, and thus we adopt two penalties
$\psi_1(x)=\frac{1}{2}|x|_{H^1}^2$ and $\psi_2(x)=|x|_{TV}$ for preserving their distinct
features. The size of the problem is 100.
\end{exam}

\begin{table}
\centering \caption{Numerical results for Example \ref{exam:h1tv}.}
\begin{tabular}{ccccccccc}
\hline
$\epsilon$&$\bs\eta_\mathrm{b}$&$\bs\eta_\mathrm{o}$&$\eta_\mathrm{h1}$&$\eta_\mathrm{tv}$&$e_\mathrm{b}$&$e_\mathrm{o}$&$e_\mathrm{h1}$&$e_\mathrm{tv}$\\
\hline
5e-2&(3.44e-3,5.75e-3)&(2.36e-4,2.14e-3)&5.68e-4&9.27e-3&3.31e-2&2.66e-2&3.97e-2&1.07e-1\\
5e-3&(1.03e-4,1.83e-4)&(2.19e-5,3.70e-4)&6.81e-5&4.85e-4&2.27e-2&1.10e-2&2.69e-2&9.48e-2\\
5e-4&(3.32e-6,6.12e-6)&(2.89e-6,5.07e-5)&1.26e-6&6.08e-5&1.25e-2&8.85e-3&1.38e-2&4.48e-2\\
5e-5&(1.07e-7,2.04e-7)&(7.04e-8,5.23e-6)&1.14e-7&4.06e-6&6.82e-3&5.53e-3&9.40e-3&1.68e-2\\
5e-6&(3.01e-9,5.77e-9)&(2.06e-10,6.65e-9)&6.01e-10&2.24e-7&4.50e-3&2.89e-3&5.28e-3&5.12e-3\\
\hline
\end{tabular}\label{tab:h1tv}
\end{table}

\begin{figure}
\centering
\begin{tabular}{ccc}
\includegraphics[width=5.2cm]{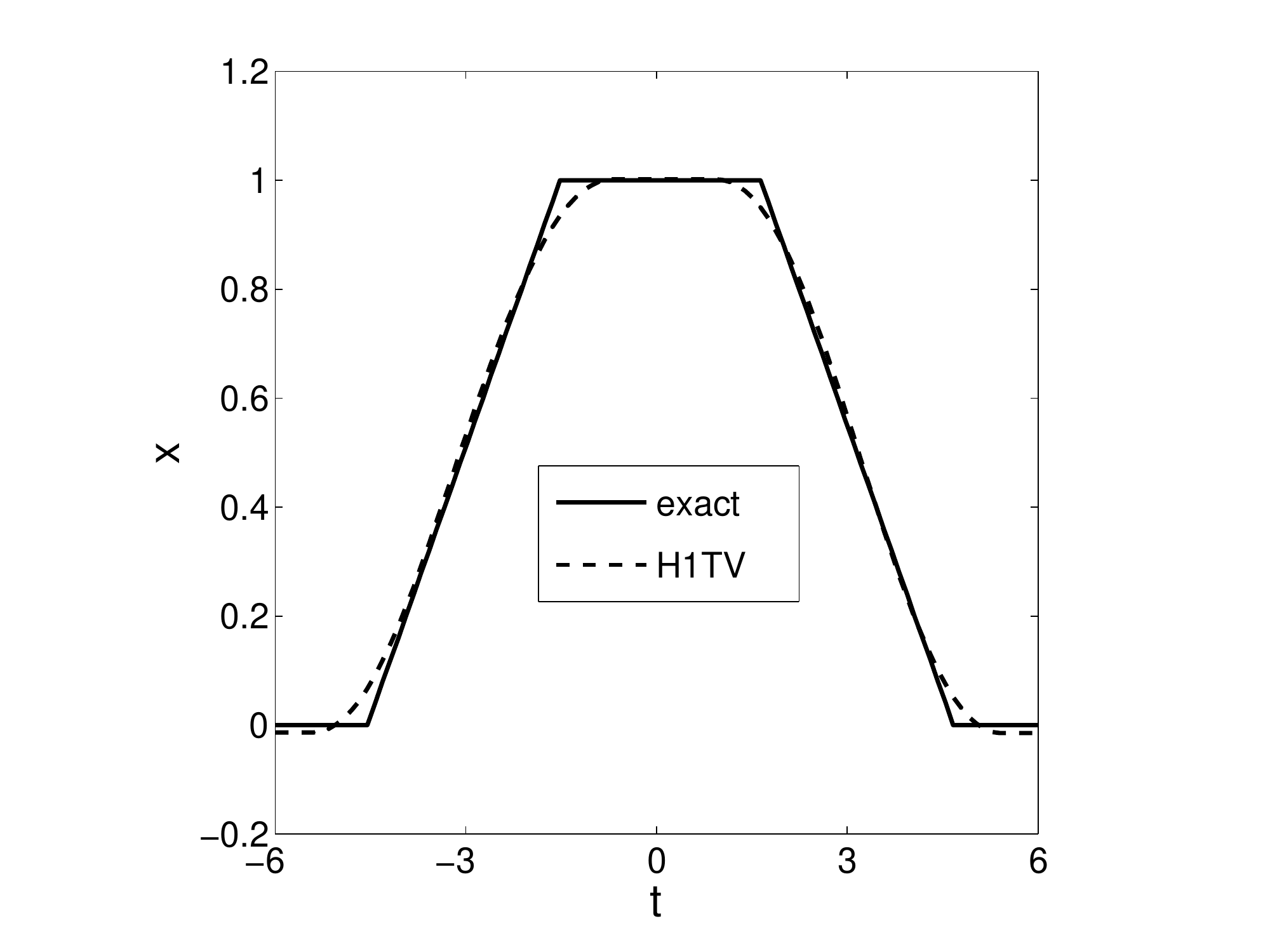}&\includegraphics[width=5.2cm]{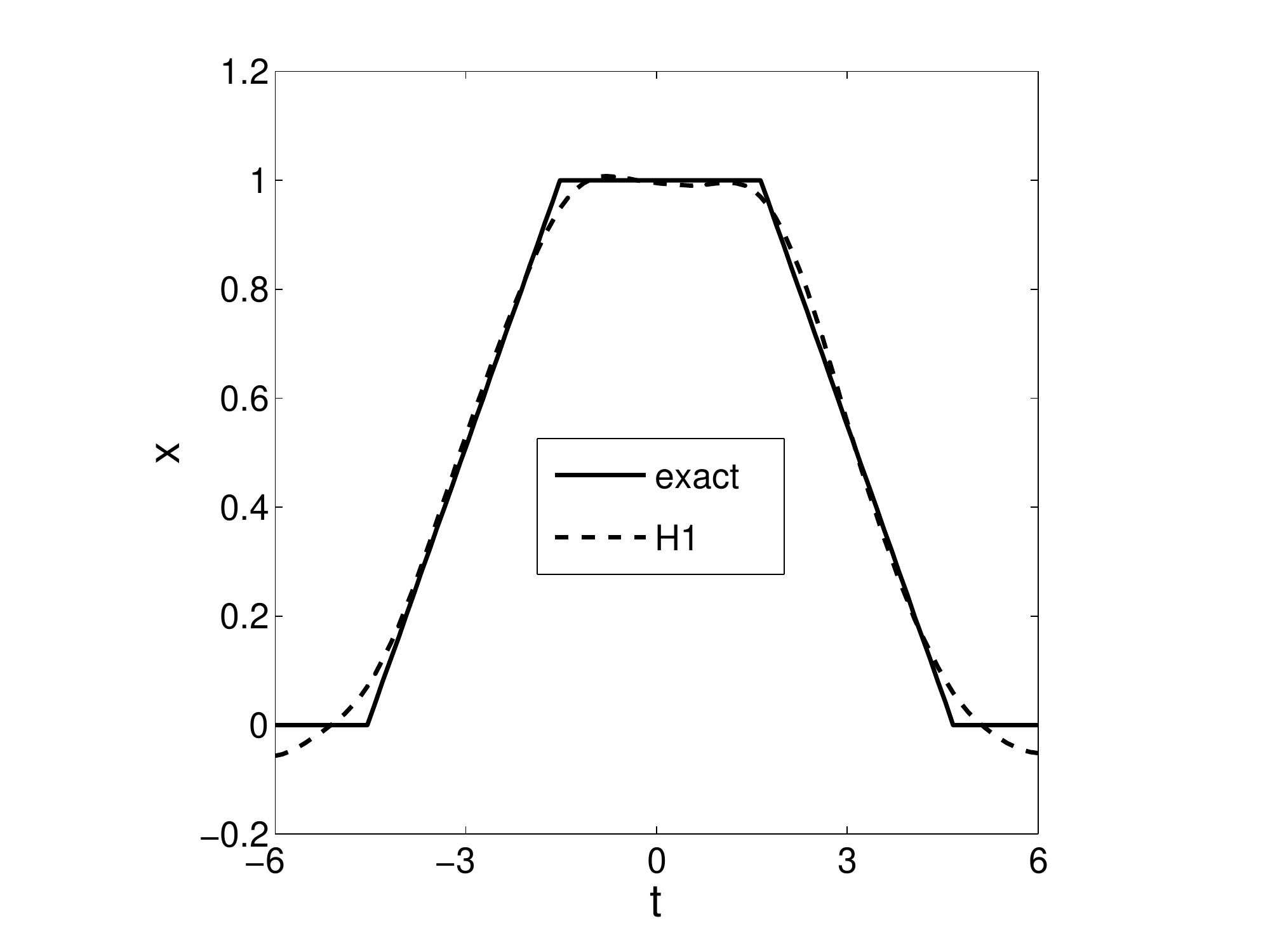}&\includegraphics[width=5.2cm]{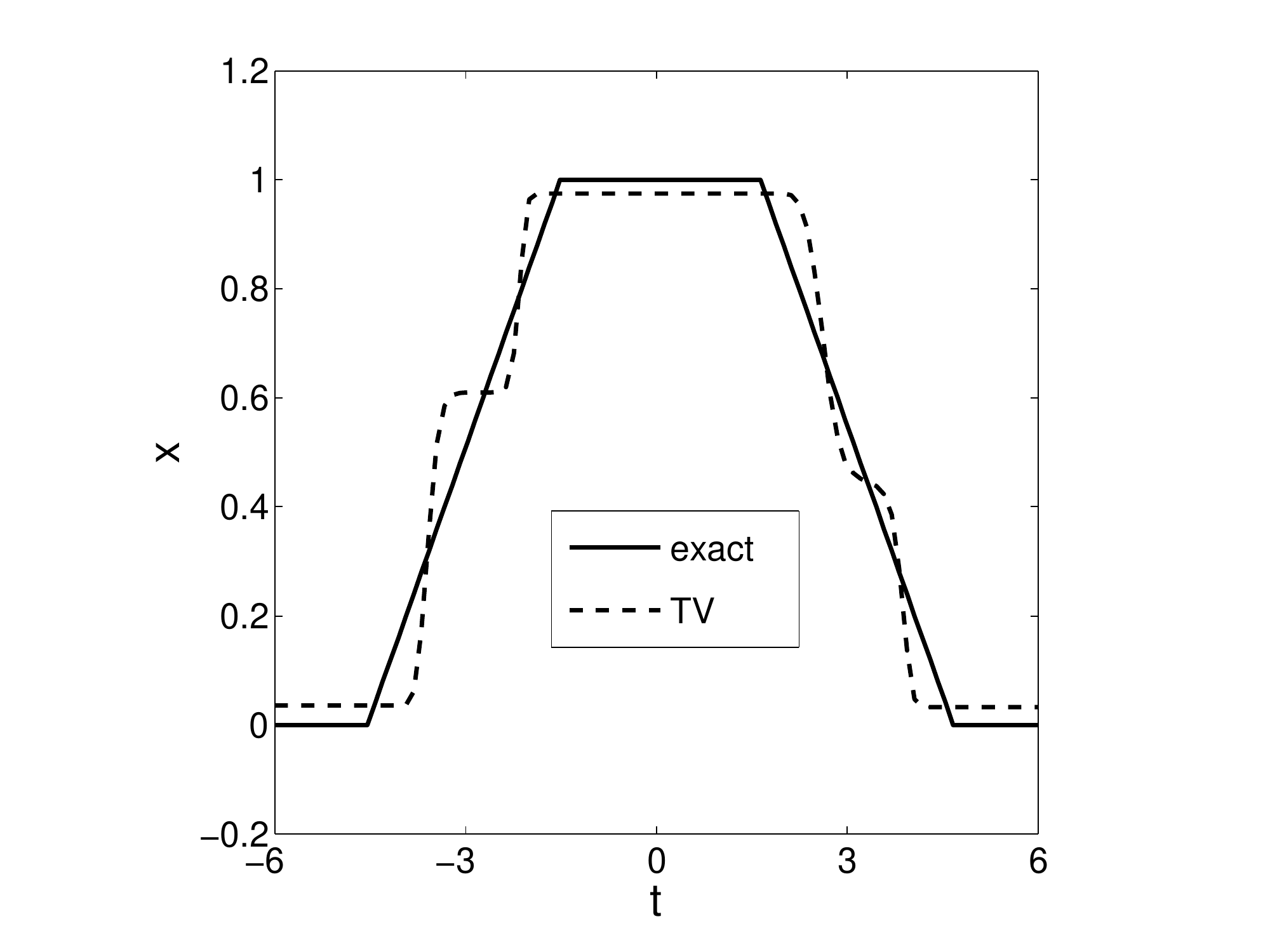}\\
$H^1$-$TV$ sol. with $\bs\eta_\mathrm{b}$&$H^1$ sol. with $\eta_\mathrm{h1}$ & $TV$ sol. with $\eta_\mathrm{tv}$
\end{tabular}
\caption{Numerical results for Example \ref{exam:h1tv} with $5\%$ noise.}\label{fig:h1tv}
\end{figure}

The numerical results are summarized in Table \ref{tab:h1tv}. In the table, the
subscripts $\mathrm{b}$ and $\mathrm{o}$ refer to the balancing principle and the optimal
choice, i.e., the value giving the smallest reconstruction error, respectively. The
results for single-parameter models are indicated by subscripts $\mathrm{h1}$ and
$\mathrm{tv}$, and the respective penalty parameter shown in Table \ref{tab:h1tv} is the
optimal one. The accuracy of the results is measured by the relative $L^2$ error
$e=\|x-x^\dagger\|/\|x^\dagger\|$. A first observation is that the error $e_\mathrm{b}$,
by the balancing principle for the proposed model $H^1$-$TV$ is smaller than the optimal
choice for either $H^1$ or $TV$ penalty. This illustrates clearly the benefit of using
multi-parameter model. Interestingly, the balancing principle gives an error fairly close
to the optimal one, and the error decreases as the noise level decreases.

\begin{figure}
\centering
\begin{tabular}{cc}
\includegraphics[width=6cm]{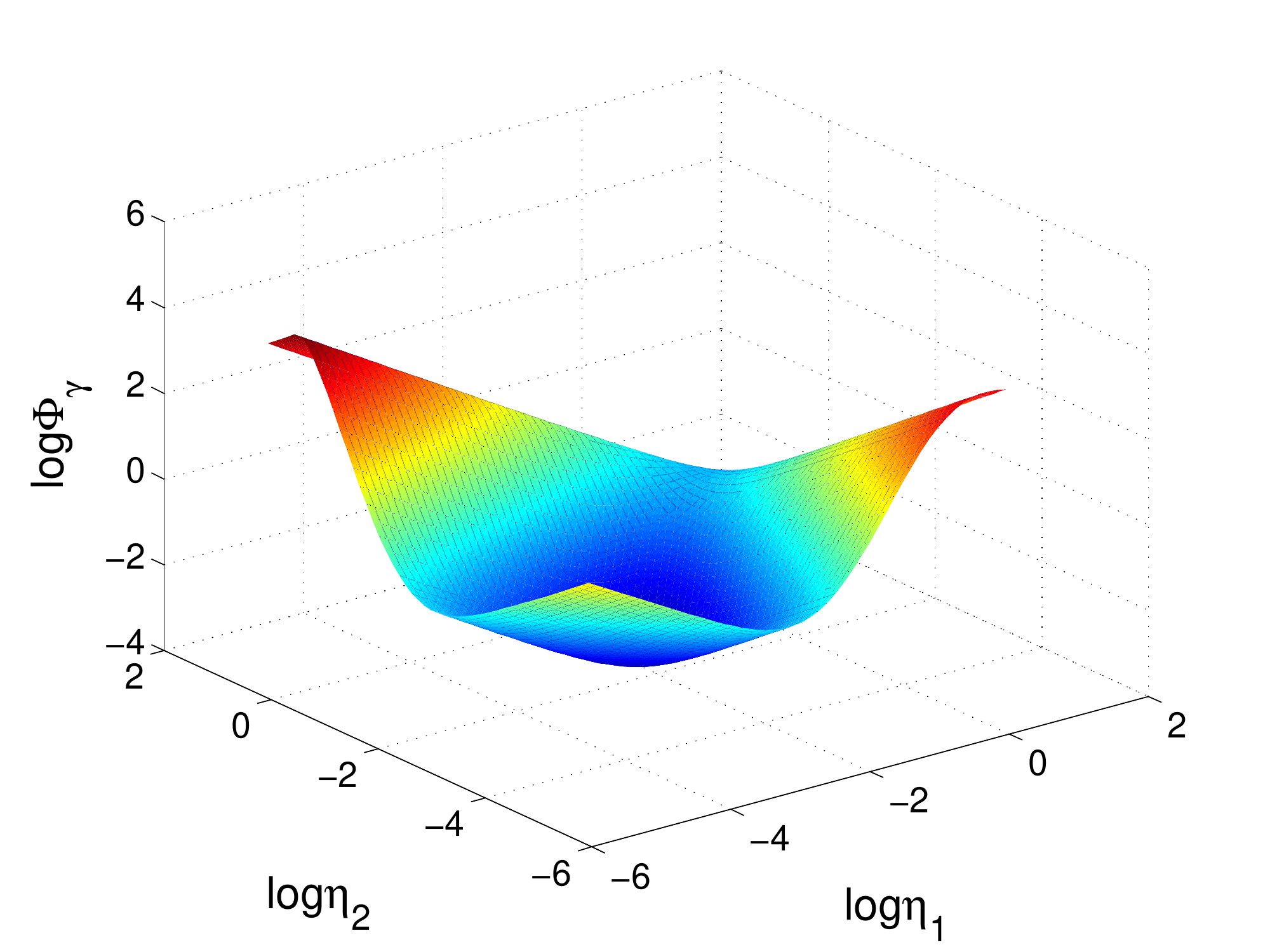}&\includegraphics[width=6cm]{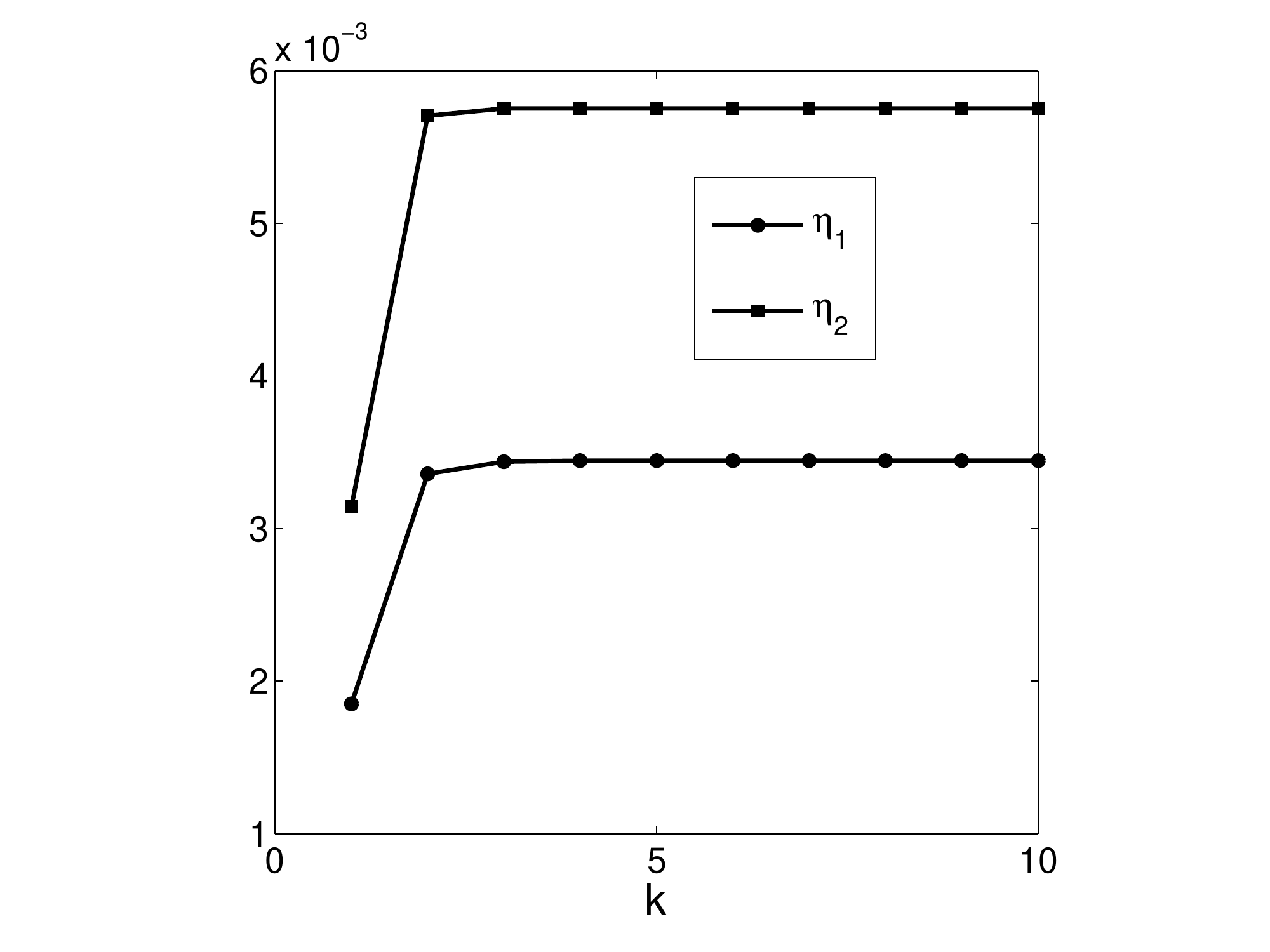}\\
$\Phi_\gamma(\bs\eta)$& convergence of Algorithm II
\end{tabular}
\caption{Numerical results for Example \ref{exam:h1tv} with $5\%$ noise.}\label{fig:h1tv:alg}
\end{figure}

The numerical results for Example 1 with $\varepsilon=5\%$ noise is shown in Fig.
\ref{fig:h1tv}. In particular, the classical $H^1$ smoothness penalty fails to restore
the flat region satisfactorily, whereas the $TV$ approach suffers from stair-case effect
in the gray region and reduced magnitude in the flat region, see Fig. \ref{fig:h1tv}. In
contrast, the proposed $H^1$-$TV$ model can preserve the magnitude of flat region while
reconstruct the gray region excellently. Therefore, it indeed combines the strengths of
both $H^1$ and $TV$ models, and is suitable for restoring images with both flat and gray
regions. The criterion $\Phi_\gamma$ is numerically well-behaved: there is a distinct
local minimum, and it is numerically easy to minimize, see Fig. \ref{fig:h1tv:alg}.
Finally, we would like to remark that the algorithm converge rapidly with the convergence
achieved in five iterations, see Fig. \ref{fig:h1tv:alg}.

\subsection{$\ell^1$-$\ell^2$ model}
\begin{exam}\label{exam:l1l2}
The kernel $k$ is given by $k(s,t)=\tfrac{1}{4}\left(\tfrac{1}{16}+ (s-t)^2\right)^{
-\frac{3}{2}}$, the exact solution $x^\dagger$ consists of two bumps and it is shown in
Fig. \ref{fig:l1l2}. The penalties are $\psi_1(x)=\|x\|_{\ell^1}$ and
$\psi_2(x)=\frac{1}{2} \|x\|_{\ell^2}^2$ to retrieve the groupwise sparsity structure.
The integration interval is $[0,1]$. The size of the problem is 100.
\end{exam}

\begin{figure}
\centering
\begin{tabular}{ccc}
\includegraphics[width=5.2cm]{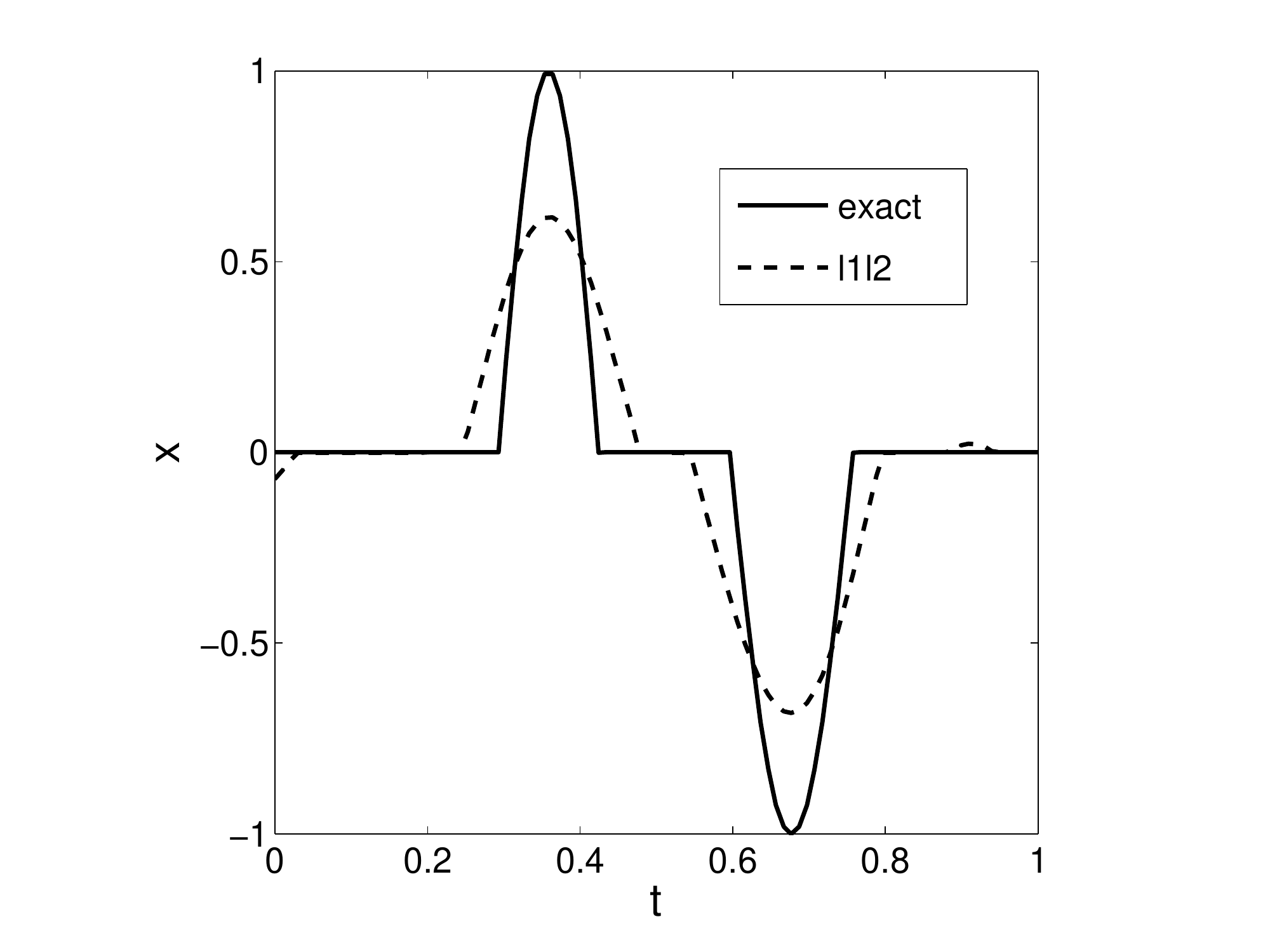}&
\includegraphics[width=5.2cm]{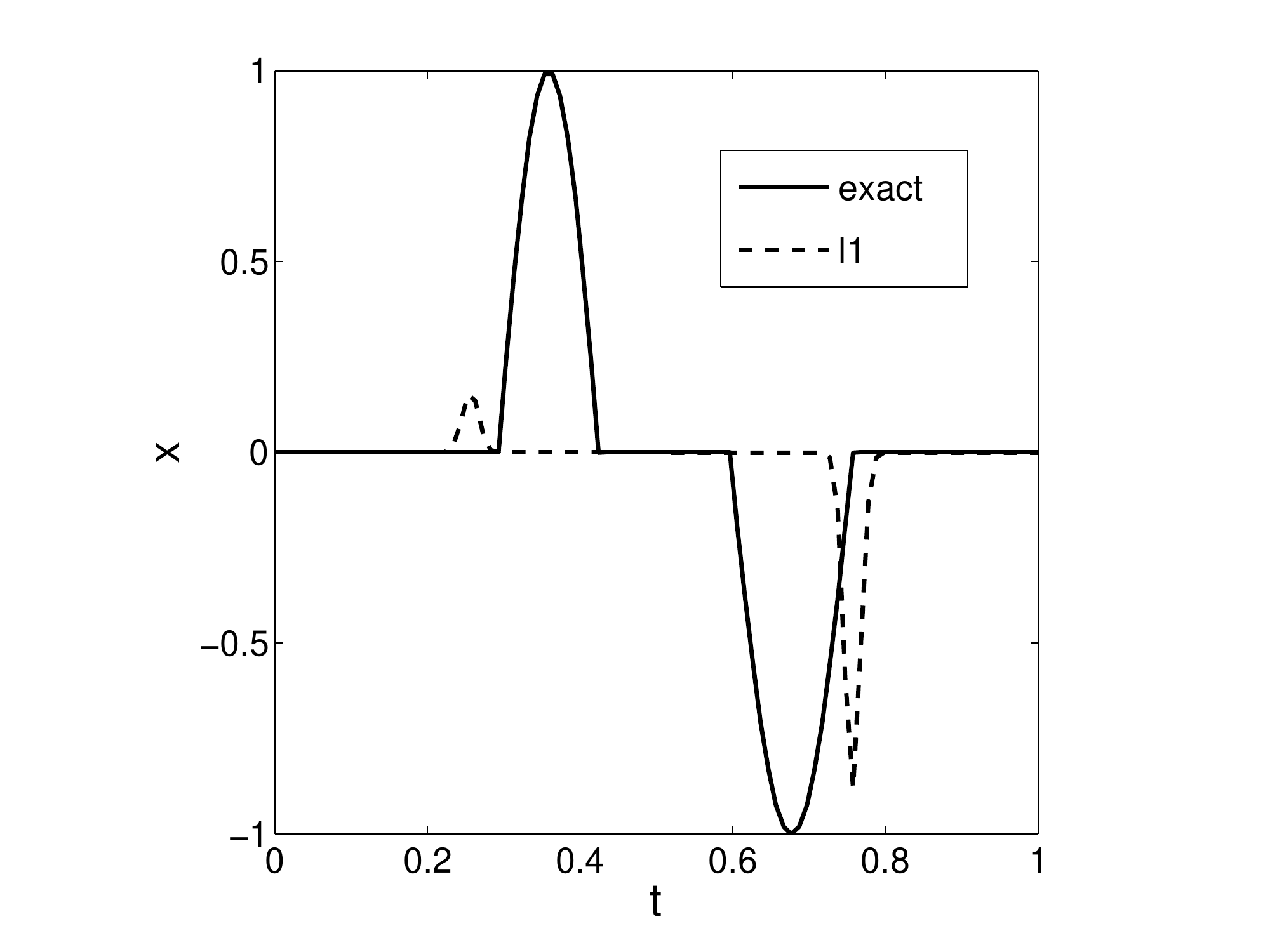}&
\includegraphics[width=5.2cm]{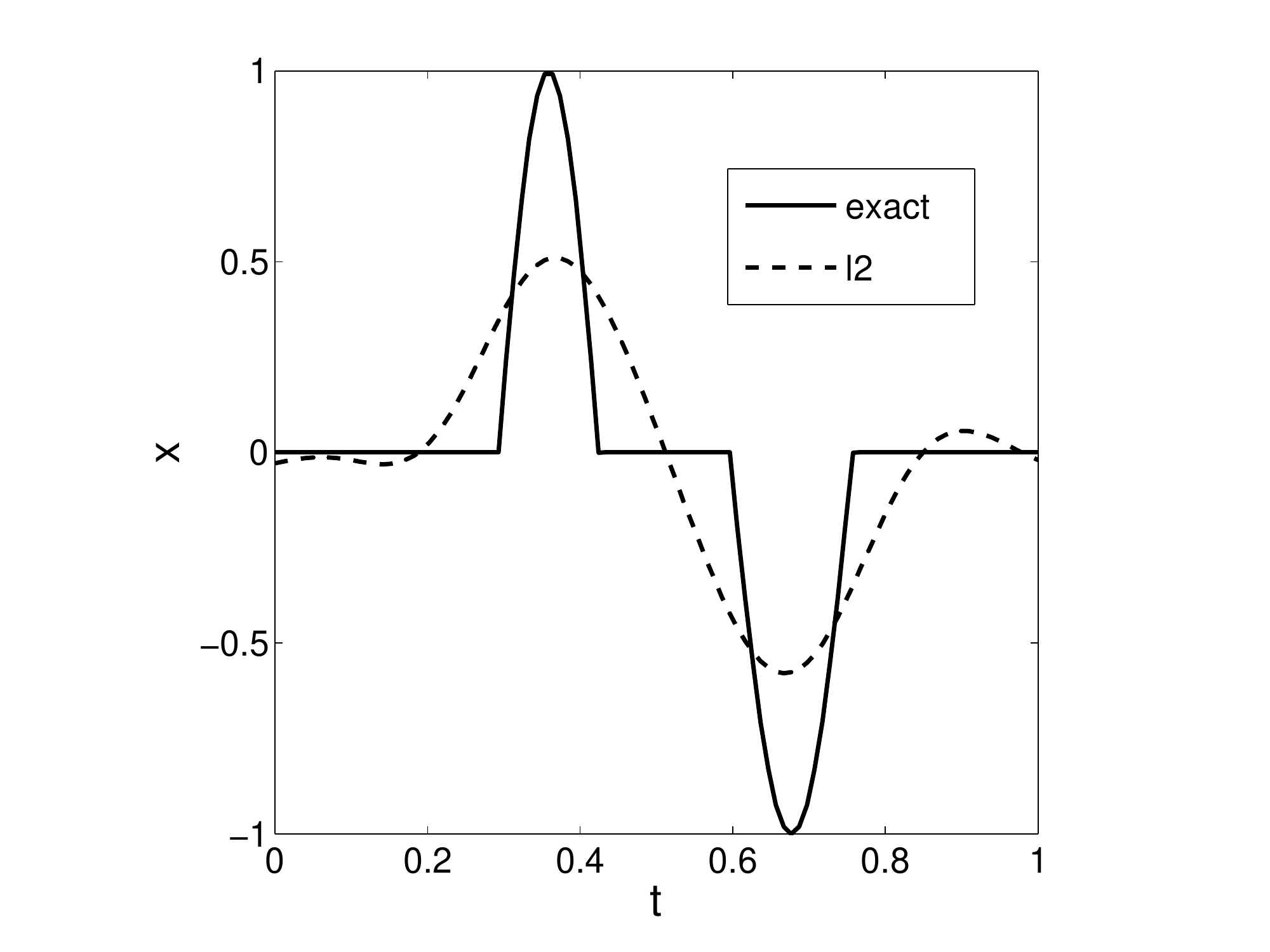}\\
$\ell^1$-$\ell^2$ sol. with $\bs\eta_\mathrm{b}$&$\ell^1$ sol. with $\eta_\mathrm{l1}$& $\ell^2$ sol. with $\eta_\mathrm{l2}$
\end{tabular}
\caption{Numerical results for Example \ref{exam:l1l2} with $5\%$ noise.}\label{fig:l1l2}
\end{figure}

\begin{figure}
\centering
\begin{tabular}{cc}
\includegraphics[width=6cm]{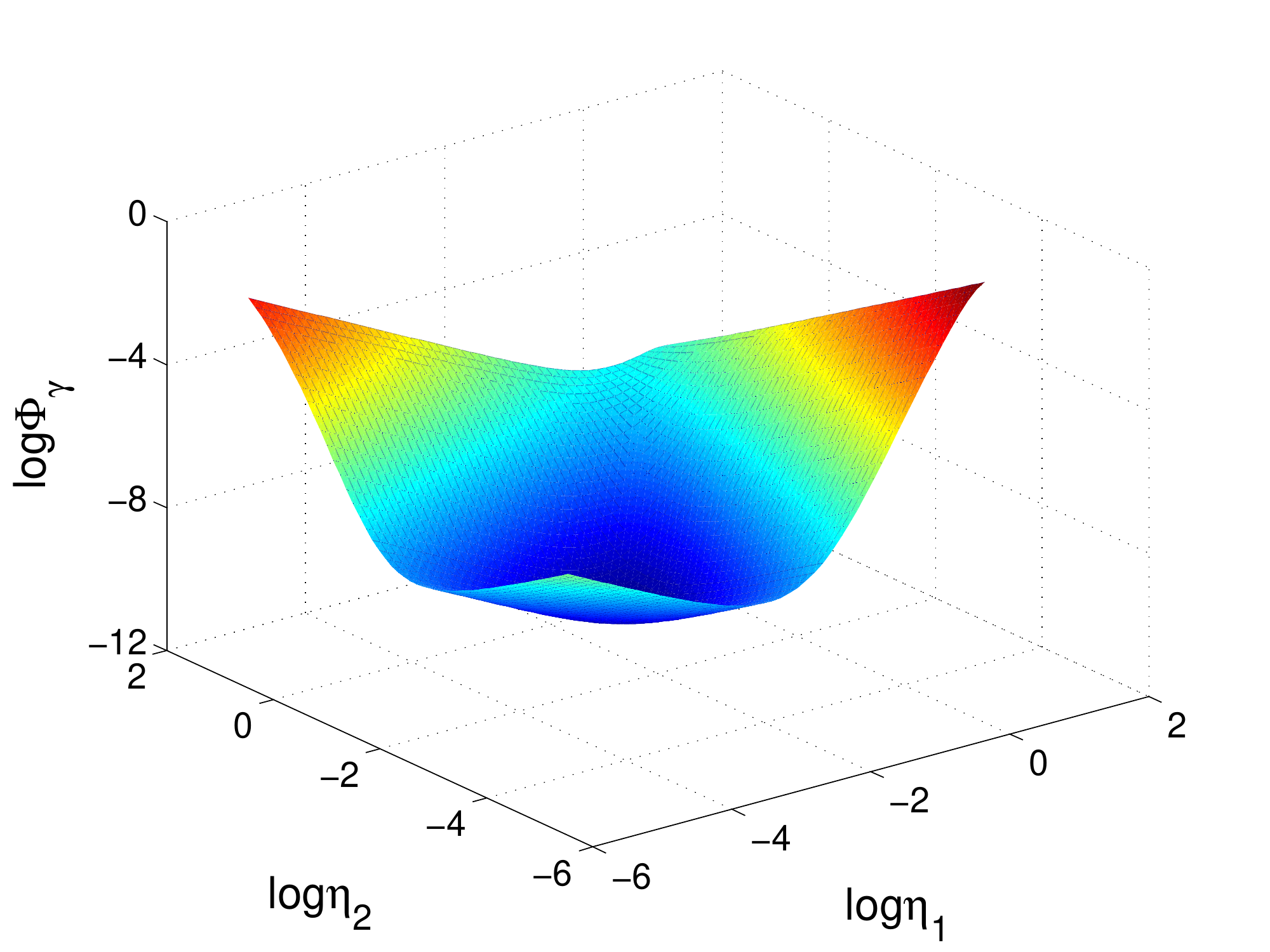}&\includegraphics[width=6cm]{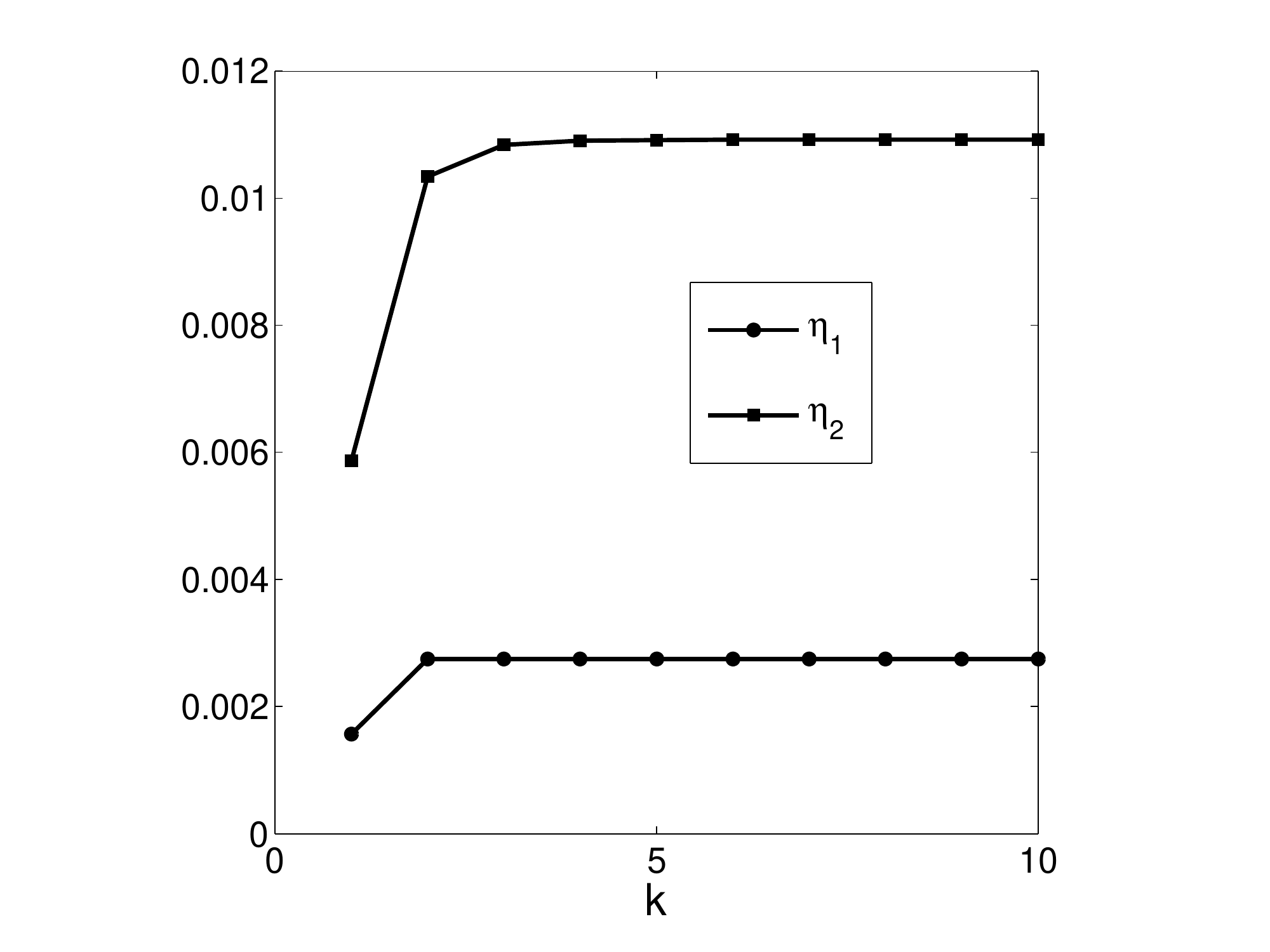}   \\
$\Phi_\gamma(\bs\eta)$& convergence of Algorithm II
\end{tabular}
\caption{Numerical results for Example \ref{exam:l1l2} with $5\%$ noise.}\label{fig:l1l2:alg}
\end{figure}

\begin{table}
\centering\caption{Numerical results for Example \ref{exam:l1l2}.}
\begin{tabular}{ccccccccc}
\hline
$\epsilon$&$\bs\eta_\mathrm{b}$&$\bs\eta_\mathrm{o}$&$\eta_\mathrm{l1}$&$\eta_\mathrm{l2}$&$e_\mathrm{b}$&$e_\mathrm{o}$&$e_\mathrm{l1}$&$e_\mathrm{l2}$\\
\hline
5e-2&(2.75e-3,1.09e-2)&(3.16e-3,1.32e-3)&2.96e0 &3.34e-3&4.18e-1&8.72e-2&1.04e0 &4.59e-1\\
5e-3&(9.16e-5,2.86e-4)&(2.46e-4,1.07e-4)&1.03e-4&3.06e-5&2.09e-1&1.24e-2&8.97e-1&2.90e-1\\
5e-4&(2.82e-6,7.48e-6)&(2.34e-5,1.14e-5)&1.30e-5&4.08e-6&5.76e-2&7.98e-3&6.18e-1&2.17e-1\\
5e-5&(8.89e-8,2.26e-7)&(2.27e-6,1.06e-6)&1.24e-6&3.84e-8&1.57e-2&4.71e-3&4.85e-1&1.66e-1\\
5e-6&(2.79e-9,7.07e-9)&(1.66e-7,1.03e-7)&4.12e-9&1.41e-9&1.27e-2&2.27e-3&2.61e-1&9.55e-2\\
\hline
\end{tabular}\label{tab:l1l2}
\end{table}

The numerical results for this example are show in Table \ref{tab:l1l2} and Fig.
\ref{fig:l1l2}. Here we are interested in the group structure of the solution with
minimal number of influencing factors (nonzero coefficients). Again, we observe that the
elastic-net compares favorably with the conventional $\ell^1$ and $\ell^2$ penalties in
terms of the error, and the balancing principle can give reasonable estimate for the
optimal choice. The conventional $\ell^2$ solution contains almost no zero entries, and
thus it fails to distinguish between influencing and noninfluencing coefficients, i.e.,
identifying relevant factors. This difficulty is partially remedied by the $\ell^1$ model
in that many entries of the $\ell^1$ solution are zero. Therefore, some relevant factors
are correctly identified. However, it tends to select only some instead of all relevant
factors, i.e., group structure. The elastic-net model combines the best of both $\ell^1$
and $\ell^2$ models, and it achieves the desired goal of identifying the group structure.
Moreover, the magnitude assigned to the coefficients are reasonable compared to others.
The algorithm converges quickly within five iterations.

\subsection{2D image deblurring}
\begin{exam}\label{exam:l1l22d} The penalties are
$\psi_1(x)=\|x\|_{\ell^1}$ and $\psi_2(x)=\frac{1}{2} \|x\|_{\ell^2}^2$. The kernel $k$
performs standard Gaussian blur with standard deviation $1$ and blurring width $5$. The
exact solution $x^\dagger$ is shown in Fig. \ref{fig:l1l22d}. The size of the image is
$50\times50$.
\end{exam}

\begin{figure}\centering

\begin{tabular}{ccc}
\includegraphics[width=3.5cm]{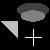}&\includegraphics[width=3.5cm]{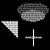}&\\
exact $x^\dagger$& $\ell^1$-$\ell^2$ sol. with $\bs\eta_\mathrm{o}=$(1.25e-2,1.29e-3) & \\
\includegraphics[width=3.5cm]{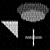}&\includegraphics[width=3.5cm]{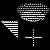}
&\includegraphics[width=3.5cm]{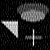}\\
$\ell^1$-$\ell^2$ sol. with $\bs\eta_\mathrm{b}=$(1.14e-3,
1.12e-3)& $\ell^1$ sol. with $\eta_\mathrm{l1}$=5.30e-1 & $\ell^2$ sol. with $\eta_\mathrm{l2}=$3.31e-3
\end{tabular}
\caption{Numerical results for Example \ref{exam:l1l22d} with $1\%$
noise.}\label{fig:l1l22d}
\end{figure}

This example showcases a more realistic problem of image deblurring. Here one half of the
data points are retained. The reconstructions for $1\%$ noise are shown in Fig.
\ref{fig:l1l22d}. The $\ell^1$ solution is more spiky, and neighboring pixels more or
less act independently. In particular, due to missing data, there are some missing pixels
in the blocks and the cross to be recovered. In contrast, the $\ell^2$ solution is more
blockwise, but there are many nonzero coefficients indicated by the small spurious
oscillations in the background. The elastic-net model achieves the best of the two:
retaining the block structure with only fewer spurious nonzero coefficients. This is
deemed important in medical imaging, e.g., classification. The numbers are more telling:
$e_\mathrm{b}=2.99\times10^{-1}$, $e_\mathrm{o}=2.44\times10^{-1}$,
$e_\mathrm{l1}=9.21\times10^{-1}$, and $e_\mathrm{l2}=3.42\times10^{-1}$. Therefore, the
error $e_\mathrm{b}$ for elastic-net agrees well with the optimal choice, and it is
smaller than that with the optimal choices for both $\ell^1$ and $\ell^2$ models.

\section{Concluding remarks}
We have studied theoretical properties of multi-parameter Tikhonov regularization. Some
properties, e.g., monotonicity, concavity, asymptotic and differentiability, of the value
function, were established. The discrepancy principle is partially justified in terms of
consistency and convergence rates, however, the regularization parameter is not uniquely
determined, which partially limits its practical application. It is of interest to
develop auxiliary rules, which is currently under investigation. In contrast, the
balancing principle allows justifications in terms of a posteriori error estimate and
efficient numerical implementation. The numerical experiments show that multi-parameter
models can significantly improve the reconstruction quality and the balancing principle
can give reasonable results in comparison with the optimal choice in a computationally
efficient way. The two proposed algorithms for computing the parameters of the balancing
principle deliver excellent convergence behavior. However, a rigorous convergence
analysis remains to be established.

\section*{Acknowledgements}
This work was partially carried out during the visit of Bangti Jin at Department of
Mathematics, North Carolina State University. He would like to thank Professor Kazufumi
Ito for hospitality. Bangti Jin was supported by Alexander von Humboldt foundation
through a postdoctoral research fellowship.

\bibliographystyle{abbrv}
\bibliography{multitikh}
\end{document}